\newtheorem{definition}{Definition}
\newtheorem{lemma}{Lemma}
\newtheorem*{remark}{Remark}
\newtheorem{theorem}{Theorem}
\newcommand{\EE}{\mathbb{E}}
\newcommand{\PP}{\mathbb{P}}
\newcommand{\RR}{\mathbb{R}}
\newcommand{\HH}{\mathcal{H}}
\newcommand{\vari}{{\rm Var}}
\newcommand{\ve}{\varepsilon}
\renewcommand{\span}{{\rm span}}
\begin{document}

\title[On probabilistic Nyman-Beurling criteria]{On probabilistic generalizations\\ of the Nyman-Beurling criterion for the zeta function} 
\author{S\'ebastien Darses --- Erwan Hillion} 

\dedicatory{\flushright To Luis B\'aez-Duarte,\\\flushright In Memoriam.}

\address{Aix-Marseille Universit\'e, CNRS, Centrale Marseille, I2M, Marseille, France} 
\email{sebastien.darses@univ-amu.fr}

\address{Aix-Marseille Universit\'e, CNRS, Centrale Marseille, I2M, Marseille, France}
\email{erwan.hillion@univ-amu.fr}

\begin{abstract}

The Nyman-Beurling criterion is an approximation problem in the space of square integrable functions on $(0,\infty)$, which is equivalent to the Riemann hypothesis. This involves dilations of the fractional part function by factors $\theta_k\in(0,1)$, $k\ge1$. We develop probabilistic extensions of the Nyman-Beurling criterion by considering these $\theta_k$ as random: this yields new structures and criteria, one of them having a significant overlap with the general strong B\'aez-Duarte criterion. 
The main goal of the present paper is the study of the interplay between these probabilistic Nyman-Beurling criteria and the Riemann hypothesis. We are able to obtain equivalences in two main classes of examples: dilated structures as exponential $\cal E(k)$ distributions, and random variables $Z_{k,n}$, $1\le k\le n$, concentrated around $1/k$ as $n$ is growing.  
By means of our probabilistic point of view, we bring an answer to a question raised by B\'aez-Duarte in 2005: the price to pay to consider non compactly supported kernels is a controlled condition on the coefficients of the involved approximations.
\end{abstract}

\maketitle

\footnote{\textit{Keywords:}  Number theory; Probability; Zeta function; Nyman-Beurling criterion; B\'aez-Duarte criterion. }

\section{Introduction}

Open problem since Riemann's memoir in 1859, the Riemann hypothesis (RH) enjoys numerous equivalent reformulations from many areas of mathematics. We refer to two expository papers \cite{Con03} and \cite{Bal10} for discussions about various approaches. One of these stems from functional analysis, which goes back to the works of Nyman \cite{Nym50} and Beurling \cite{Beu55}, strengthened by B\'aez-Duarte \cite{BD03}. 

The Nyman-Beurling criterion is an approximation problem in the space of square integrable functions on $(0,\infty)$, which involves dilations of the fractional part function by factors $\theta_k\in(0,1)$, $k\ge1$. We develop in the current paper a new approach based on considering these dilation factors as random and possibly in the whole range $(0,\infty)$. This probabilistic point of view provides new structures and yields an answer to a question raised by B\'aez-Duarte in \cite{BD05}: It is possible to obtain a sufficient condition (implying RH) while considering analytic kernels in the general strong B\'aez-Duarte criterion introduced in \cite{BD05}.

In this introduction, we first start with basic notations. Second, we recall the known deterministic criteria. We then introduce what we call the probabilistic and the general Nyman-Beurling criteria. We finally describe the main results of our paper.



\subsection{Basic notations} \label{subsec:notations}
We adopt the following conventions and notations for

{\it Functions}
\begin{itemize}
    \item The indicator function of a set $A$ is defined as $\1_A(x) = 1$ if $x \in A$ and $\1_A(x)=0$ if not. In particular, we set $\chi=\1_{(0,1]}$. The fractional part (resp. the integral part) of a real number $x\ge 0$ reads $\left\{x\right\}$ (resp. $\lfloor x \rfloor$), and then $\left\{x\right\} = x-\lfloor x \rfloor$. For $\theta>0$, we set
\begin{equation*}
\rho_\theta(t) = \left\{\frac{\theta}{t}\right\},\quad t>0.
\end{equation*}
    \item The Riemann zeta function $\zeta$ is defined for $\sg>1$ as \bean 
    \zeta(s)=\sum_{k\ge1}\frac{1}{k^s}, \quad s=\sg+i\tau.
    \eean 
    \item The M\"obius function $\mu:\N\to \{-1,0,1\}$ is defined as $\mu(1)=1$, $\mu(n)=(-1)^r$ if $n=p_1\ldots p_r$ where $p_1,\ldots,p_r$ are distinct primes, and $\mu(n)=0$ if not (i.e. $\mu(n)=0$ if $p^2|n$).
    \item We use either Landau's notation $f=O(g)$ or Vinogradov's $f\ll_\alpha g$ to mean that $|f|\le C|g|$ for some constant $C>0$ that may depend on a parameter $\alpha$.
\end{itemize}


{\it Hilbert spaces}
\begin{itemize}
    \item The Hilbert space $H=L^2(0,\infty)$ of real valued square integrable functions for the Lebesgue measure is endowed with its scalar product (and associated norm $\|f\|_H$):
    $\d \langle f,g \rangle_H = \d \int_0^\infty f(t)g(t) dt$.
    \item Let $(f_\alpha)_{\alpha\in A}$ be a family in a Hilbert space $F$. We define $\span_F\{f_\alpha, \alpha\in A\}$
as the closure in $F$ of the vector space spanned by $(f_\alpha)_{\alpha\in A}$.
\end{itemize}


{\it Probability}
\begin{itemize}
    \item $(\Omega,\cal F,\bP)$ is a probability space. We set $\cal H = L^2(\Omega,H)$.
    \item The space of non negative random variables (r.v.) having $p$-moment is denoted by $L^p_+(\Omega)$, $p\ge1$. The expectation (resp. the variance) of $X\in L^2_+(\Omega)$ reads $\EE[X]$, or simply $\EE X$ (resp. $\vari(X)$). We also set $\|X\|_p=(\pE X^p)^{1/p}$ when $X\in L^p_+(\Omega)$.
    \item We write $X\sim \Gamma(\beta,\lb)$ to mean that the r.v. $X$ is Gamma distributed with parameters $(\beta,\lb)$.
    In that case, $\d \pE X = \frac{\beta}{\lb}$ and  $\d \vari(X)=\frac{\beta}{\lb^2}$.
    The particular case of the exponential law $\cal E(\lb)=\Gamma(1,\lb)$ of parameter $\lb$ will be one basic example throughout the paper. Recall that if $X \sim \cal E(1)$ and $\lb > 0$, then $X/\lb \sim \cal E(\lb)$.
\end{itemize}

\subsection{The deterministic criteria} 

Let us recall the fundamental identity (see e.g. \cite[(2.1.5)]{Tit86})
\bean
\int_0^\infty \left\{\frac{1}{t}\right\} t^{s-1} dt = -\frac{\zeta(s)}{s},\quad 0<\sg<1,
\eean
which gives, by means of a change of variable,
the following relationship between $\zeta$ and the Mellin transform of $\rho_\theta$:
\begin{equation}\label{eq:MellinFractional}
\widehat{\rho_\theta}(s)=\int_0^\infty \rho_\theta(t) t^{s-1} dt = - \theta^{s} \frac{\zeta(s)}{s}, \quad 0<\sg<1.
\end{equation}

RH states that the non-trivial zeros of $\zeta$ belong to the critical line $\sg=\frac{1}{2}$. See \cite{Tit86} and \cite{Ten95} for basic and advanced theory on $\zeta$.  Equation~\eqref{eq:MellinFractional} allows for different equivalent restatements of RH, which has been first done in \cite{Nym50}, \cite{Beu55}. The Nyman-Beurling criterion (NB) can be stated as follows:
\begin{theorem}[\cite{BDBLS00}] \label{th:NBcriterion}
RH holds if and only if
\begin{equation}\label{eq:NBcriterion}
\chi \in \span_H\left\{ \rho_\theta, \ 0 <\theta \le 1 \right\}.
\end{equation}
\end{theorem}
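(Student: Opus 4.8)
\emph{Sketch of a proof.} The plan is to transport the whole approximation problem to the critical line $\Re s=\tfrac12$ by means of the Mellin--Plancherel isometry, and there to recognize it as a problem about a shift-invariant subspace of a Hardy space whose structure is governed by the zeros of $\zeta$.

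First I would fix the isometry $M\colon H\to L^2(\RR,d\tau)$ given (up to the factor $\sqrt{2\pi}$) by $Mf(\tau)=\widehat{f}(\tfrac12+i\tau)$, where $\widehat{f}$ is the Mellin transform; the change of variables $t=e^{-x}$ turns $M$ into the ordinary Fourier transform and shows it is unitary. Using the fundamental identity \eqref{eq:MellinFractional} and the elementary computation $\widehat{\chi}(s)=1/s$ for $\Re s>0$, the two objects of interest become, on the line $s=\tfrac12+i\tau$,
\[
M\chi(\tau)=\frac1s,\qquad M\rho_\theta(\tau)=-\,\theta^{\,s}\,\frac{\zeta(s)}{s}.
\]
The hypothesis $0<\theta\le1$ is the decisive structural feature: writing $\theta=e^{-\lambda}$ with $\lambda\ge0$, we have $\theta^{\,s}=e^{-\lambda/2}e^{-i\lambda\tau}$, so the family $\{M\rho_\theta\}$ is exactly the orbit of the single function $\tau\mapsto\zeta(s)/s$ under the one-parameter \emph{semigroup} of modulations $e^{-i\lambda\tau}$, $\lambda\ge0$. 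Hence $\overline{\span}_{L^2}\{M\rho_\theta\}$ is a shift-invariant subspace, and the condition $\chi\in\span_H\{\rho_\theta\}$ is equivalent to $1/s$ lying in it.

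For the direction ``RH $\Rightarrow$ \eqref{eq:NBcriterion}'' I would argue by density. Under RH, $\zeta(s)\ne0$ for $\Re s>\tfrac12$, so $\zeta(s)/s$ is a non-vanishing (outer-type) factor in the relevant half-plane; multiplication by it therefore does not shrink the generated shift-invariant subspace, which then fills the whole Hardy space of that half-plane, and in particular contains $1/s=M\chi$. For the converse ``\eqref{eq:NBcriterion} $\Rightarrow$ RH'' I would use the contrapositive: given a hypothetical zero $\rho_0$ with $\Re\rho_0>\tfrac12$, I would exhibit $g\in\mathcal B^{\perp}$, where $\mathcal B=\span_H\{\rho_\theta\}$, with $\langle\chi,g\rangle_H\ne0$. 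Parseval for $M$ shows that $g\in\mathcal B^{\perp}$ iff $\tau\mapsto\frac{\zeta(s)}{s}\,\overline{Mg(\tau)}$ has Fourier transform supported on $(-\infty,0)$; the zero $\rho_0$ off the line contributes, through the Beurling--Lax description of such subspaces, an additional Blaschke-type factor, which produces a $g$ not orthogonal to $\chi$. Equivalently and more concretely, $\rho_0$ yields a functional on $H$ behaving like the regularized point evaluation $f\mapsto\widehat{f}(\rho_0)$, which kills every $\widehat{\rho_\theta}$ (all carry the factor $\zeta$ vanishing at $\rho_0$) but returns $1/\rho_0\ne0$ on $\chi$.

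The main obstacle is twofold and analytic rather than structural. In the converse direction the naive functional $f\mapsto\widehat{f}(\rho_0)=\int_0^\infty f(t)t^{\rho_0-1}\,dt$ is \emph{not} bounded on $H$, since $t^{\rho_0-1}\notin L^2(0,\infty)$; it must be regularized (for instance by subtracting a $\chi$-component, or by working directly with the half-line support condition above) so as to remain continuous on $H$ while still annihilating the $\rho_\theta$. In the direct direction the difficulty is to turn ``$\zeta\ne0$ in the half-plane'' into an honest $L^2$ density statement on the boundary line, controlling the polynomial growth of $\zeta$ on vertical lines and the behaviour near a putative zero lying \emph{on} the line $\Re s=\tfrac12$ itself. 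I expect this density/outer-function step to be the genuinely hard part of the argument.
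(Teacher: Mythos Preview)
Your sketch follows the classical Hardy-space route and is sound as an outline. The paper, however, proves only the ``if'' implication (span condition $\Rightarrow$ RH), citing the converse to \cite{BDBLS00}, and it does so by an elementary argument that completely sidesteps the regularization obstacle you flag. Rather than produce an element of $\mathcal B^\perp$ or invoke Beurling--Lax, the paper applies the Mellin integral directly to the residual $r_n=\chi-\sum_k c_k\rho_{\theta_k}$ at a putative zero $s$ with $\tfrac12<\sigma<1$: by \eqref{eq:MellinFractional} and $\zeta(s)=0$ this equals $1/s$ for every $n$, and one shows it tends to $0$. The point is that although $f\mapsto\widehat f(s)$ is unbounded on $H$, it is controllable on these specific $r_n$: on $(0,1)$ Cauchy--Schwarz applies because $t^{2\sigma-2}\in L^1(0,1)$, and on $(1,\infty)$ the constraint $\theta_k\le1$ forces $\rho_{\theta_k}(t)=\theta_k/t$, so $r_n(t)=-(\sum_k c_k\theta_k)/t$ there, and then $|\sum_k c_k\theta_k|^2=\int_1^\infty r_n(t)^2\,dt\le d_n^2$. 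No Hardy space, no outer factor, no regularized functional: the splitting at $t=1$ together with $\theta_k\le1$ is the whole trick.

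Your approach, by contrast, treats both directions in one framework and makes the shift-invariance structure explicit; the paper's buys a short, self-contained proof of the implication it actually needs (and whose template it reuses for Theorem~\ref{th:gNBtoRH}). For the converse direction, which you do sketch, your outer-function heuristic is the right picture, but be aware that ``$\zeta(s)/s$ non-vanishing on $\Re s>\tfrac12$'' is not by itself the same as ``outer'': one must also rule out a singular inner factor, and the boundary zeros on $\Re s=\tfrac12$ (which persist under RH) have to be shown not to obstruct the approximation of $1/s$ specifically. That is indeed where the work lies, as you anticipated.
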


A proof of the if part of Theorem~\ref{th:NBcriterion} will be given at the beginning of Section~\ref{sec:dilated}.

\begin{remark} \rm 
Theorem~\ref{th:NBcriterion} is stated in a slightly different form than in the original papers \cite{Nym50}, \cite{Beu55}, in which the Hilbert space considered by the authors is $L^2(0,1)$. See \cite{BDBLS00} for the extension to the case of $H$. 
\end{remark}

Hence, RH holds if, given $\ve>0$, there exist $n \ge 1$, coefficients $c_1,\ldots,c_n \in \RR$, and $\theta_1,\ldots,\theta_n \in (0,1]$ such that 
\begin{equation} \label{eq:NBexplicit}
\int_0^\infty \left( \chi(t) - \sum_{k=1}^n c_k \left\{ \frac{\theta_k}{t} \right\}\right)^2 dt < \ve.
\end{equation}

Equation~\eqref{eq:NBexplicit} is reminiscent to the following convergence result:
\begin{equation} \label{eq:MobiusConvergence}
\lim_{n \rightarrow \infty} \sum_{k=1}^n \mu(k) \left\{\frac{1}{kt}\right\} = - \chi(t), \quad  t>0.
\end{equation}
This convergence holds point-wise and does not hold in $H$, see \cite[p.5-6]{BD99} for details, but this identity lead B\'aez-Duarte towards a stronger form of the Nyman-Beurling criterion, namely,

\begin{theorem}[\cite{BD03}] \label{th:BDcriterion}
RH holds if and only if
\begin{equation}\label{eq:BDcriterion}
\chi \in \span_H\left\{ \rho_{1/n},  n \ge1\right\}.
\end{equation}
\end{theorem}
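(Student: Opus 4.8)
The plan is to prove the two implications separately; the substantive one is that RH forces the membership~\eqref{eq:BDcriterion}, its converse being essentially free. For the direction ``membership implies RH'', observe that $1/n \in (0,1]$ for every $n \ge 1$, so $\{\rho_{1/n} : n \ge 1\} \subseteq \{\rho_\theta : 0 < \theta \le 1\}$ and therefore $\span_H\{\rho_{1/n}, n \ge 1\} \subseteq \span_H\{\rho_\theta, 0 < \theta \le 1\}$. Consequently \eqref{eq:BDcriterion} implies \eqref{eq:NBcriterion}, and RH follows from the Nyman-Beurling criterion (Theorem~\ref{th:NBcriterion}). Nothing beyond Theorem~\ref{th:NBcriterion} is needed here.

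For the reverse direction I would transfer the problem to the critical line by Mellin-Plancherel. Writing $s = \tfrac12 + i\tau$, the map $f \mapsto \widehat{f}(\tfrac12 + i\tau)$ is, up to the factor $1/\sqrt{2\pi}$, an isometry from $H$ onto $L^2(\RR, d\tau)$; under it $\chi \mapsto 1/s$ and, by~\eqref{eq:MellinFractional}, $\rho_{1/n} \mapsto -\,n^{-s}\zeta(s)/s$. Hence \eqref{eq:BDcriterion} is equivalent to
\[
\frac{1}{s} \in \overline{\span}\left\{ \frac{\zeta(s)}{s}\,n^{-s} : n \ge 1 \right\}
\quad\text{in } L^2(\tfrac12 + i\RR),
\]
that is, to the existence of Dirichlet polynomials $P_N(s) = \sum_{n \le N} a_n^{(N)} n^{-s}$ such that
\[
\int_{-\infty}^{\infty} \frac{\bigl|\,1 - \zeta(s) P_N(s)\,\bigr|^2}{|s|^2}\, d\tau
\xrightarrow[N \to \infty]{} 0, \qquad s = \tfrac12 + i\tau .
\]
The weight $|s|^{-2}$ is what renders this integral finite at infinity.

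The natural approximants are smoothed partial sums of the M\"obius series, since formally $\sum_n \mu(n) n^{-s} = 1/\zeta(s)$. The raw choice $a_n^{(N)} = \mu(n)$ corresponds to~\eqref{eq:MobiusConvergence} and fails to converge in $H$; instead I would introduce a Riesz-type damping such as $a_n^{(N)} = \mu(n)\bigl(1 - \log n/\log N\bigr)$, turning $P_N$ into a mollified version of $1/\zeta$ on the line, and then try to show $\zeta(s) P_N(s) \to 1$ in the weighted norm above. This is where RH enters: it guarantees that $1/\zeta$ is holomorphic and of subpolynomial growth throughout $\Re s > \tfrac12$, which is exactly what is needed for the Riesz-smoothed Dirichlet polynomials to reproduce $1/\zeta$ in the integrated sense. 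The nontrivial zeros, which under RH lie on the line itself, should contribute only through shrinking neighborhoods whose total weighted measure tends to $0$, using the density estimate $N(T) \sim (T/2\pi)\log T$ together with the decay $|s|^{-2}$.

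I expect the main obstacle to be precisely this last $L^2$-convergence on the critical line: bounding $\int |1 - \zeta P_N|^2\,|s|^{-2}\,d\tau$ uniformly in $N$ calls for quantitative mean-value estimates for $\zeta$ on $\Re s = \tfrac12$ that are available only under RH, and the most delicate point is the behavior near the zeros, where $\zeta P_N$ cannot tend to $1$ pointwise yet must do so after integration against the weight. Once these estimates are secured, \eqref{eq:BDcriterion} follows and the nontrivial implication is complete.
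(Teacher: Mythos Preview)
The paper does not prove Theorem~\ref{th:BDcriterion}; it merely quotes it from~\cite{BD03}. So there is no ``paper's own proof'' to compare against. That said, a few comments on your proposal are in order.

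Your treatment of the implication \eqref{eq:BDcriterion} $\Rightarrow$ RH is correct and complete: the inclusion $\{\rho_{1/n}\}\subset\{\rho_\theta:0<\theta\le1\}$ immediately reduces to Theorem~\ref{th:NBcriterion}.

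For RH $\Rightarrow$ \eqref{eq:BDcriterion}, what you have written is not a proof but a plan with an explicitly acknowledged gap. The Mellin--Plancherel reformulation is correct, and the idea of approximating $1/\zeta$ on the critical line by mollified M\"obius sums is exactly the right one; it is in fact the strategy carried out in~\cite{BD03} and, in the quantitative form the present paper later invokes (Theorem~\ref{th:quantitativeBD}), in~\cite{BdR10}. But the sentence ``then try to show $\zeta(s)P_N(s)\to1$ in the weighted norm'' is where all the work lies, and you have not done it. Your suggested damping $(1-\log n/\log N)$ is not the one that is known to succeed; the actual arguments use coefficients of the form $\mu(n)n^{-\varepsilon}$ (as in~\cite{BdR10}) or closely related mollifiers, and the $L^2$ estimate requires, beyond the holomorphy of $1/\zeta$ for $\sigma>1/2$, precise control of $\int|\zeta(1/2+it)|^2|M_N(1/2+it)|^2\,dt/|s|^2$ via contour shifts and mean-value theorems. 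Your remark that the zeros ``should contribute only through shrinking neighborhoods'' is heuristic and does not correspond to how the argument actually proceeds.

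In short: the easy direction is fine; the hard direction is a reasonable outline of the known proof, but the central analytic estimate is asserted rather than established.
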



The B\'aez-Duarte criterion (BD) can be restated as follows. For $n \ge  1$, let $\chi_n$ be the orthogonal projection of $\chi$ onto the linear subspace $H_n \subset H$ spanned by the family $(\rho_k)_{1 \le  k \le  n}$. The quantity
\begin{equation*}
d_n = \| \chi- \chi_n\|_H
\end{equation*}
is the distance between $\chi$ and $H_n$. Then RH holds if and only if $\lim_{n \rightarrow \infty} d_n = 0$ (It is furthermore equivalent to a particular asymptotic behaviour of the coefficients of $\chi_n$, see \cite{W07}). A stronger statement is actually conjectured, namely
\begin{equation*}
d_n^2 \sim \frac{C}{\log(n)},
\end{equation*}
where $C=2+\gamma-\log(4\pi)$, see~\cite{BS04}. Burnol proved the inequality $d_n^2 \ge \frac{C+o(1)}{\log(n)}$ for the same constant $C$, see \cite{Bur02}.
The inequality $d_n<\e$ provides zero-free regions for $\zeta$, see \cite{Nik95} for details when considering NB, and \cite{DFMR13} for more general results on Dirichlet series.

\medskip

A more general criterion has been stated in 2005 by B\'aez-Duarte, see \cite{BD05}. It is based on the M\"untz transform 
\begin{equation}
Pf(t)  =  \sum_{k\ge   1}f(tk) - \frac{1}{t} \int_0^{+\infty} f(x)dx, \quad f\in L^1(0,\infty),
\end{equation}
which is related to $\zeta$ via the M\"untz formula, see \cite[Theorem 3.1]{Bur07}, and \cite{Bur07} for a general study. B\'aez-Duarte considers in \cite{BD05} good kernels $f$, see \cite[Definition 1.1]{BD05} (this notion will be studied in Section \ref{gNBdilated}). The general strong B\'aez--Duarte criterion (gBD) is stated as follows:

\begin{theorem}\cite[Theorem 1.2]{BD05} \label{gBD}
Let $f:(0,\infty)\to \R$ be a good kernel. If RH holds then
\begin{equation}\label{eq:gBD}
f \in \span_H\left\{ t\mapsto Pf(nt),  n \ge1\right\}.
\end{equation}
Conversely, if \eqref{eq:gBD} is satisfied for a good kernel $f$ that is \emph{compactly supported} and whose Mellin transform has no zeros in the critical strip $\{1/2<\sg<1\}$, 
then RH holds.
\end{theorem}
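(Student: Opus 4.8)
The plan is to push everything onto the critical line via the Mellin--Plancherel isometry and to reduce the direct implication to the B\'aez--Duarte criterion (Theorem~\ref{th:BDcriterion}). With $s=\tfrac12+i\tau$, the map $h\mapsto\widehat h(\tfrac12+i\tau)$ is an isometry (up to $\tfrac1{2\pi}$) of $H=L^2(0,\infty)$ onto $L^2(\RR,d\tau)$. The M\"untz formula gives $\widehat{Pf}(s)=\zeta(s)\widehat f(s)$ for $0<\sg<1$, hence $\widehat{Pf(n\cdot)}(s)=n^{-s}\zeta(s)\widehat f(s)$, while \eqref{eq:MellinFractional} gives $\widehat{\rho_{1/n}}(s)=-n^{-s}\zeta(s)/s$. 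Taking the ratio, the dependence on $n$ disappears:
\begin{equation*}
\widehat{Pf(n\cdot)}(s)=M(s)\,\widehat{\rho_{1/n}}(s),\qquad M(s):=-\,s\,\widehat f(s),
\end{equation*}
and the same multiplier sends $\widehat\chi(s)=1/s$ to $-\widehat f(s)$.

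For the direct part, I would observe that the good kernel hypotheses make $M(\tfrac12+i\tau)=-(\tfrac12+i\tau)\widehat f(\tfrac12+i\tau)$ bounded on the line, so that multiplication by $M$ is, through the isometry, a bounded operator $T_M$ on $H$ mapping $\rho_{1/n}$ to $Pf(n\cdot)$ and $\chi$ to $-f$. Since a bounded operator sends a closed span into the closure of the images of its generators, $T_M\bigl(\span_H\{\rho_{1/n},\,n\ge1\}\bigr)\subseteq\span_H\{Pf(n\cdot),\,n\ge1\}$. If RH holds, Theorem~\ref{th:BDcriterion} yields $\chi\in\span_H\{\rho_{1/n},\,n\ge1\}$, so $-f=T_M\chi$ lies in $\span_H\{Pf(n\cdot),\,n\ge1\}$, which is \eqref{eq:gBD}.

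For the converse I argue by contraposition. If RH fails, fix a zero $\rho_0=\beta_0+i\gamma_0$ of $\zeta$ with $\tfrac12<\beta_0<1$. As $\beta_0>\tfrac12$, the function $w_0(t):=t^{\overline{\rho_0}-1}\1_{(0,1)}(t)$ lies in $H$, indeed in the copy of the Hardy space $H^2(\sg>\tfrac12)$ consisting of Mellin transforms of $L^2(0,1)$ functions; its Mellin transform $\widehat{w_0}(s)=(s+\overline{\rho_0}-1)^{-1}$ is the reproducing kernel at $\rho_0$. Suppose \eqref{eq:gBD} held and write $\psi_j=f-\sum_n c_n^{(j)}Pf(n\cdot)\to0$ in $H$, with Dirichlet polynomials $D_j(s)=\sum_n c_n^{(j)}n^{-s}$. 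Pairing against $w_0$ and using the reproducing property together with $\zeta(\rho_0)=0$, one finds
\begin{equation*}
\langle\psi_j,w_0\rangle_H=\widehat f(\rho_0)-\frac{\widehat f(1)\,D_j(1)}{1-\rho_0}.
\end{equation*}
The compact support of $f$ enters twice here. First, it makes $Pf(nt)=-\tfrac1{nt}\int_0^\infty f$ for $t$ beyond the support, so that $\psi_j(t)=\widehat f(1)D_j(1)/t$ there; since $\|\psi_j\|_H\to0$, this forces $\widehat f(1)D_j(1)\to0$, killing the correction term coming from the pole of $\zeta$ at $s=1$. Second, it guarantees the analyticity needed to evaluate the pairing at $\rho_0$ in the first place. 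Passing to the limit gives $0=\widehat f(\rho_0)$, contradicting the hypothesis that $\widehat f$ has no zeros in $\{1/2<\sg<1\}$. Hence RH holds.

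The main obstacle is the converse, and within it the clean extraction of the value $\widehat f(\rho_0)$ from the inner product $\langle\psi_j,w_0\rangle_H$. This is where the reproducing-kernel (equivalently, Hardy space) machinery must be set up carefully and where the pole of $\zeta$ at $s=1$ has to be neutralized: the saving fact is that the compact support of $f$ pins down the tail of $\psi_j$ exactly, turning the otherwise uncontrolled residue at $s=1$ into a quantity proportional to $\widehat f(1)D_j(1)$ that vanishes in the limit. The two structural hypotheses of the statement are thus used in a complementary way---compact support to make the kernel evaluation legitimate and to tame the pole, and the absence of zeros of $\widehat f$ in the critical strip to ensure that the reproduced value $\widehat f(\rho_0)$ is genuinely nonzero. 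The direct implication, by contrast, is soft once $M$ is seen to be a bounded multiplier and demands nothing beyond Theorem~\ref{th:BDcriterion}.
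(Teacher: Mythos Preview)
The paper does not give its own proof of this statement: Theorem~\ref{gBD} is quoted verbatim from \cite[Theorem~1.2]{BD05}, and the only place the paper touches its proof is in Section~\ref{gNBdilated}, where it recalls (without details) that under RH, B\'aez--Duarte transfers the B\'aez--Duarte approximation of $\chi$ by $\rho_{1/k}$ into an approximation of $f$ by $Pf(k\cdot)$. Your direct implication does exactly this transfer, phrased as a bounded Mellin multiplier $M(s)=-s\widehat f(s)$; this is the same mechanism as in \cite{BD05} and as alluded to in the proof of Theorem~\ref{Dn}. One caveat: the boundedness of $M$ on $\sg=\tfrac12$ does not follow for free from the two integrability conditions in the good-kernel definition alone (try $f'(t)\sim t^{-3/2}$ near $0$); one needs the additional fact, established in \cite{BD05}, that a good kernel and its M\"untz transform lie in $H$, and then a careful integration by parts.

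Your converse is in the right spirit but the displayed identity $\langle\psi_j,w_0\rangle_H=\widehat f(\rho_0)-\widehat f(1)D_j(1)/(1-\rho_0)$ is not quite what the pairing gives. Since $w_0$ is supported on $(0,1)$, $\langle\psi_j,w_0\rangle_H=\int_0^1\psi_j(t)t^{\rho_0-1}\,dt$, which differs from $\widehat{\psi_j}(\rho_0)=\widehat f(\rho_0)$ by the full tail $\int_1^\infty\psi_j(t)t^{\rho_0-1}\,dt$, not just the explicit $1/t$ piece beyond the support of $f$; on the intermediate range $[1,K]$ the function $\psi_j$ still involves $f$ and the series part of $Pf$. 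The clean way---and the one B\'aez--Duarte uses, mirroring the proof of Theorem~\ref{th:NBcriterion} reproduced in Section~\ref{sec:dilated}---is to work directly with the Mellin integral $\int_0^\infty\psi_j(t)t^{\rho_0-1}dt$, split it at a threshold $M$ beyond the support of $f$, bound $\int_0^M$ by Cauchy--Schwarz against $\|\psi_j\|_H$, and observe that on $[M,\infty)$ compact support makes $\psi_j(t)=\widehat f(1)D_j(1)/t$ exactly, so that $\int_M^\infty|\psi_j|^2\le\|\psi_j\|_H^2$ forces $\widehat f(1)D_j(1)\to0$. This avoids any reproducing-kernel bookkeeping and makes the role of compact support transparent.
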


The extension of the sufficient criterion for kernels $f$ that are not compactly supported is left open in~\cite{BD05}. The compactness assumption prevents from using analytical kernels, which are crucial for regularization and explicit calculations of the M\"untz transform. One contribution of our paper is the extension of Theorem~\ref{gBD} to a class of non-compactly supported kernels.




\subsection{The probabilistic and general criteria}

The basic idea of our work is to randomize the variables $\theta_k$ in NB, i.e. by replacing them by random variables. More precisely: 

\begin{definition} \label{def:pNBgNB} Given a family $(Z_{k,n})_{n \ge 1, 1 \le k \le n}$ of random variables and a family of coefficients $(c_{k,n})_{n \ge 1, 1 \le k \le n}$, we consider the distances
\bean
\cal D_n^2 & = & \EE \int_0^\infty\left(\chi(t)-\sum_{k=1}^n c_{k,n} \left\{\frac{Z_{k,n}}{t}\right\}\right)^2dt \qquad ({\rm pNB}),\\
D_n^2 & = & \int_0^\infty\left(\chi(t)-\sum_{k=1}^n c_{k,n} \EE \left\{\frac{Z_{k,n}}{t}\right\}\right)^2dt \qquad ({\rm gNB}).
\eean
We say that the family $(Z_{k,n})$ satisfies the probabilistic (resp. general) Nyman--Beurling criterion ${\rm pNB}(Z_{k,n})$ (resp. ${\rm gNB}(Z_{k,n})$), if one can find coefficients $(c_{k,n})$ such that $\cal D_n \to 0$ (resp. $D_n \to 0$).
\end{definition}

Let us notice that $D_n^2 \le \cal D_n^2$, due to $(\EE X)^2\le \EE X^2$,
which means that pNB implies gNB.\\


\subsection{Main results and Outline} 

In Section~\ref{sec:Erdos}, we show the implication pNB $\Longrightarrow$ RH under an assumption ($\cal P$) that is suited for random variables on $(0,1)$, but mild enough to be satisfied for some families supported on $(0,\infty)$. The proof of this implication is based on Erd\"os' probabilistic method.
In Section~\ref{sec:concentrated}, we study families of random variables that are more and more concentrated around $1/k$, $k\ge1$. For such families, we prove the equivalence pNB $\Longleftrightarrow$ RH. The intuitive underlying idea is that the random perturbation around $1/k$ is sufficiently small to be able to use a quantitative version of the B\'aez-Duarte criterion.

In Section~\ref{sec:dilated}, we prove the implication gNB $\Longrightarrow$ RH under a moment assumption on the sequence $(Z_{k,n})$. These r.v. may have a non-compact support, but the price to pay is a condition (C) on the growth of the coefficient $c_{k,n}$. 
We finally prove the implication RH $\Longrightarrow$ gNB + (C) for dilated r.v. using \cite[Theorem 1.2]{BD05} and a probabilistic interpretation of the M\"untz operator. 

In Section~\ref{sec:conclusion}, we illustrate our various criteria with exponential r.v. $\cal E(k)$ (prototype for dilated r.v.) and Gamma distribution $\Gamma(k,n)$ (prototype for concentrated r.v.).

\section{The pNB criterion} \label{sec:Erdos}

\subsection{Probabilistic framework and preliminaries} \

The Hilbert space
$
\HH = L^2(\Omega,H) \simeq L^2(\Omega \times (0,\infty))$ is endowed with the scalar product
\begin{equation*}
\langle Z,Z' \rangle_\HH = \EE \langle Z(\omega,\cdot), Z'(\omega,\cdot) \rangle_H  = \EE \int_0^\infty Z(\omega,t) Z'(\omega,t) dt.
\end{equation*}
To any random variable $X : \Omega \rightarrow \RR$, we associate the random Beurling function
$
\rho_X(t) = \left\{ \frac{X}{t} \right\},
$
which belongs to $\cal H$ when $X\in L^{1}_+(\Omega)$, see Lemma~\ref{prop:rhoL2} below.
We also introduce the "indicator random variable" $\chi(\omega,t) = \1_{[0,1]}(t)$, which is constant as an element of $L^2(\Omega,H)$.

A natural generalization of the deterministic Nyman-Beurling criterion to a probabilistic framework is the pNB criterion, defined in Definition \ref{def:pNBgNB}. 

As in the deterministic case, see \cite{BDBLS05}, the interesting point of such a criterion relies on the formula expressing the squared distance in $\cal H$ between $\chi$ and any subspace $\span_{\cal H}\{\rho_{Z_{k,n}}, 1\le k \le n\}$
as a quotient of Gram determinants.
One goal is then to figure out laws that reveal remarkable structures in the scalar products, leading to calculable determinants. \\

We recall that if $A\in\cal F$ is an event, then
$\pE [\1_A] =  \bP(A)$,
and that if $X\in L^1(\Omega)$, $X\ge0$, Fubini theorem yields the identity
\bean
\pE X =\pE\int_0^\infty \1_{t\le X} dt = \int_0^\infty \bP(X\ge t) dt.
\eean

\begin{prop} \label{prop:rhoL2}
Let $X\in L^1_+(\Omega)$. Then $\rho_X\in \cal H$ and 
\bean
\| \rho_X \|_{\cal H}^2 = \pE \int_0^\infty \left\{\frac{X}{t} \right\}^2 dt = (\log(2\pi)-\gamma)\ \|X\|_{L^1_+(\Omega)},
\eean
where $\gamma$ is the Euler constant.
\end{prop}

\begin{proof}
If $X(\omega)=0$ then $\d \int_0^\infty \left\{\frac{X(\omega)}{t} \right\}^2 dt=0$. By the change of variable $u=t/X(\omega)$ when $X(\omega)\neq 0$, we obtain
\bean
\pE \int_0^\infty \left\{\frac{X}{t} \right\}^2 dt = \pE \int_0^\infty \left\{\frac{1}{u} \right\}^2 Xdu 
    = \pE[X] \int_0^\infty \left\{\frac{1}{u} \right\}^2du.
\eean
The last integral can be bounded by $2$ but is actually computed in \cite[Prop.87 p.38]{BDBLS03}:  \bean
\int_0^\infty \left\{\frac{1}{u} \right\}^2du=\int_0^\infty \left\{t\right\}^2\frac{dt}{t^2}=\log(2\pi)-\gamma.
\eean
\end{proof}

\begin{prop} \label{prop:Malpha}
Let $Zù\in L^2_+(\Omega)$ and $\alpha\in(0,1)$. Then, for any $M>0$,
\bean
M^\alpha\int_{M}^\infty\pE\left\{\frac{Z}{t}\right\}^2dt & \le & \|Z\|_2^{1+\alpha} \int_0^\infty u^\alpha\left\{\frac{1}{u} \right\}^2du <\infty.
\eean
\end{prop}

\begin{proof}
By the change of variable $t=Zu$ and Fubini
\bean
\int_{M}^\infty\pE\left\{\frac{Z}{t}\right\}^2dt & = & \pE\int_{0}^\infty \1_{t\ge M}\left\{\frac{Z}{t}\right\}^2dt \\
    & = & \pE\int_{0}^\infty \1_{uZ\ge M}\left\{\frac{1}{u}\right\}^2 Zdu.
\eean
But, by the Cauchy-Schwarz and Markov's inequalities,
\bean
\pE[\1_{uZ\ge M}Z]\le \sqrt{\bP(uZ\ge M)}\|Z\|_2 \le \frac{u^\alpha}{M^\alpha}\sqrt{\pE Z^{2\alpha}}\|Z\|_2.
\eean
Noting that $\sqrt{\pE Z^{2\alpha}}=\|Z\|^\alpha_{2\alpha}\le \|Z\|_2^\alpha$ we obtained the desired result.
\end{proof}

\subsection{pNB implies RH under a mild condition} 

\begin{definition}
A family $(Z_{k,n})_{1 \le k \le n, n \ge 1}$ in $L^1_+(\Omega)$ is said to satisfy Assumption $(\cal P)$ if 
\bean
\exists \nu > 0 \ , \ \forall n \ge 1 \ , \ \PP(B_n) > \nu, \qquad (\cal P)
\eean
where 
\bean
B_n = \bigcap_{1 \le  k \le  n} \{0 < Z_{k,n} \le 1 \}.
\eean
\end{definition}
As an example, let $(X_k)_{k\ge 1}$ be a sequence of independent r.v. such that $X_k \sim \cal E(k)$. Then
\bean
\PP\left( B_n \right) = \prod_{k=1}^{n} (1-e^{-k}) 
\ge \prod_{k=1}^{\infty} (1-e^{-k})>0,
\eean
the later product being convergent since $\sum_{k\ge1}e^{-k}<\infty$. Thus, $(X_k)_{k\ge 1}$ verifies Assumption ($\cal P$).

\begin{theorem} \label{th:pNBtoRH}
Let $(Z_{k,n})_{1 \le k \le n, n \ge 1}$ be a collection of r.v. in $L^1_+(\Omega)$ satisfying Assumption $(\cal P)$ and the pNB criterion. Then RH holds.
\end{theorem}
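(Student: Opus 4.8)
The plan is to deduce RH from the deterministic Nyman--Beurling criterion (Theorem~\ref{th:NBcriterion}): it suffices to show $\chi\in\span_H\{\rho_\theta,\ 0<\theta\le 1\}$, i.e. to produce, for every $\ve>0$, an integer $n$, real coefficients and dilation factors $\theta_1,\dots,\theta_n\in(0,1]$ realizing the bound~\eqref{eq:NBexplicit}. The bridge between the probabilistic hypothesis and this deterministic goal is the pathwise functional
\begin{equation*}
F_n(\omega)=\int_0^\infty\Big(\chi(t)-\sum_{k=1}^n c_{k,n}\Big\{\frac{Z_{k,n}(\omega)}{t}\Big\}\Big)^2\,dt,
\end{equation*}
where the $(c_{k,n})$ are coefficients witnessing the pNB criterion, so that $\EE[F_n]=\cal D_n^2\to 0$. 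The key observation is that on the event $B_n$ every dilation factor $Z_{k,n}(\omega)$ lies in $(0,1]$; hence for each $\omega\in B_n$, $F_n(\omega)$ is \emph{exactly} a deterministic Nyman--Beurling approximation error of the admissible form appearing in~\eqref{eq:NBexplicit}.

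The Erd\"os-type argument then decouples the smallness of $F_n$ on average from the constraint region $B_n$. Fix $\ve>0$. Since $F_n\ge0$, Markov's inequality gives $\PP(F_n>\ve)\le \cal D_n^2/\ve$, whence
\begin{equation*}
\PP\big(\{F_n\le \ve\}\cap B_n\big)\ \ge\ \PP(B_n)-\PP(F_n>\ve)\ \ge\ \nu-\frac{\cal D_n^2}{\ve}.
\end{equation*}
Because $\cal D_n\to0$, for all $n$ large enough the right-hand side is strictly positive, so the event $\{F_n\le\ve\}\cap B_n$ is nonempty. Picking any $\omega_0$ in it produces dilation factors $\theta_k:=Z_{k,n}(\omega_0)\in(0,1]$ and coefficients $c_k:=c_{k,n}$ with $\int_0^\infty\big(\chi(t)-\sum_k c_k\rho_{\theta_k}(t)\big)^2\,dt\le \ve$. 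Running this with $\ve/2$ in place of $\ve$ gives the strict inequality~\eqref{eq:NBexplicit}; letting $\ve\to0$ yields $\chi\in\span_H\{\rho_\theta,\ 0<\theta\le1\}$, and Theorem~\ref{th:NBcriterion} concludes.

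A few routine points must be verified: that $F_n$ is an $\cal F$-measurable, a.s.\ finite random variable (finiteness follows from $\EE[F_n]=\cal D_n^2<\infty$, itself ensured by $Z_{k,n}\in L^2_+(\Omega)$ and Lemma~\ref{prop:rhoL2}, which also gives $\rho_{Z_{k,n}(\omega)}\in H$ for a.e.\ $\omega$), and that positivity of the probability legitimately yields the existence of $\omega_0$. The conceptual heart of the argument --- and where Assumption~$(\cal P)$ is indispensable --- is the \emph{uniform} lower bound $\PP(B_n)>\nu$: as $n$ grows one needs all $n$ variables $Z_{k,n}$ to land simultaneously in $(0,1]$, an event whose probability could a priori decay to $0$ and destroy the bound $\nu-\cal D_n^2/\ve>0$. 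Assumption~$(\cal P)$ is precisely what forbids this collapse, so the only (mild) obstacle is the quantitative competition between the decay of $\cal D_n^2$ and the floor $\nu$ on $\PP(B_n)$.
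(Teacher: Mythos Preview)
Your proof is correct and follows essentially the same approach as the paper: define the pathwise error, use Markov's inequality together with the uniform lower bound $\PP(B_n)>\nu$ to show $\PP(\{F_n\le\ve\}\cap B_n)>0$, then extract an $\omega_0$ to feed into the deterministic Nyman--Beurling criterion. The only cosmetic difference is that the paper applies Markov to $\sqrt{F_n}$ and phrases the intersection bound via $\PP(A_n)+\PP(B_n)>1$, whereas you apply Markov to $F_n$ and bound $\PP(\{F_n\le\ve\}\cap B_n)\ge\PP(B_n)-\PP(F_n>\ve)$ directly; both are equivalent.
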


The underlying idea of the proof consists in showing that the classical Nyman-Beurling criterion holds via Erd\"os' probabilistic method: in order to prove that an object exists, it suffices to show that it belongs to a set of positive measure, as explained in Chapter 1 of \cite{AS08}.

\begin{proof}
Fix $\ve>0$. Since $(Z_{k,n})_{1 \le k \le n , n \ge 1}$ satisfies pNB and Assumption $(\cal P)$, there exist $\nu >0$, $n\ge1$, and $c_{1,n},\cdots, c_{n,n}\in\R$ such that $\PP\left( B_n \right) >  \nu$ and
\bean
\cal D_n^2  =  \EE \left\|\chi(t)-\sum_{k=1}^n c_{k,n} \left\{\frac{Z_{k,n}}{t}\right\}\right\|_H^2 & < &\epsilon^2 \nu.
\eean
Let us consider the event
\bean
A_n =  \left\{\left\|\chi(t)-\sum_{k=1}^n c_{k,n} \left\{\frac{Z_{k,n}}{t}\right\}\right\|_H \le  \ve \right\}. 
\eean
By Markov's inequality, $\PP(^cA_n) \le \cal D_n^2/\ve^2$, and then 
\bean
\PP(A_n) \ge  1-\frac{\cal D_n^2}{\ve^2}>1-\nu.
\eean
Hence
$\PP(A_n) +\PP(B_n)  >   1$ and so
\begin{equation*}
\PP(A_{n} \cap B_{n}) = \PP(A_{n}) + \PP(B_{n}) - \PP(A_{n} \cup B_{n}) >0.
\end{equation*}
Thus, there exists $\omega\in\Omega$ such that, writing $\theta_{k}=Z_{k,n}(\omega)$,
\begin{equation*}
\left\|\chi(t)-\sum_{k=1}^n c_k \left\{\frac{\theta_k}{t}\right\}\right\|_H \le  \ve \ , \quad  0<\theta_{k}\le1, \quad 1\le  k\le  n.
\end{equation*}
Therefore, from the classical Nyman-Beurling criterion (Theorem~\ref{th:NBcriterion}), RH holds. \end{proof}

\subsubsection{A lower bound}

Let $(Z_{k,n})$ be a family of r.v. in $L^2_+(\Omega)$. Set
\bean
m_n  &=& \min_{1\le k\le n}Z_{k,n}, \\ M_n  &=&  \max_{1\le k\le n}Z_{k,n}, \\
\cal D_{n}  & = & \inf_{c_{1,n},\cdots,c_{n,n}}\left\|\chi(t)-\sum_{k=1}^n c_{k,n}\left\{\frac{Z_{k,n}}{t}\right\}\right\|_{\cal H}. 
\eean

\begin{lemma}\label{suffi}
Let $(Z_{k,n})$ satisfy Assumption $(\cal P)$. Then the following lower bound holds:
\bean
\cal D^2_{n} & \gg & \d \frac{1}{\log 2 + \pE \left|\log m_n \right|}.
\eean
\end{lemma}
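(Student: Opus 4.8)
The plan is to reduce the estimate to a purely deterministic lower bound and then average. Since in the definition of $\mathcal{D}_n$ the coefficients $c_{k,n}$ are deterministic while the dilations $Z_{k,n}$ are random, I first freeze $\omega$: for every fixed choice of $(c_{k,n})$ the pointwise integral can only decrease when the coefficients are chosen optimally for that $\omega$, so
\begin{equation*}
\EE\int_0^\infty\Big(\chi(t)-\sum_{k=1}^n c_{k,n}\big\{\tfrac{Z_{k,n}}{t}\big\}\Big)^2dt\ \ge\ \EE\big[\1_{B_n}\,\delta(\omega)^2\big],\qquad \delta(\omega)^2:=\inf_{c}\Big\|\chi-\sum_{k=1}^n c_k\rho_{Z_{k,n}(\omega)}\Big\|_H^2.
\end{equation*}
Taking the infimum over $(c_{k,n})$ on the left gives $\mathcal{D}_n^2\ge\EE[\1_{B_n}\delta^2]$. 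On $B_n$ all dilations satisfy $0<Z_{k,n}(\omega)\le1$, so it suffices to establish a \emph{deterministic} bound $\delta^2\gg 1/(\log 2+\log(1/m))$, uniformly over finite families $\theta_1,\dots,\theta_n\in(0,1]$ with $m=\min_k\theta_k$, and then to recombine.

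For the deterministic core I would pass to the Mellin transform on the critical line. Writing $P(s)=\sum_k c_k\theta_k^{\,s}$ and using \eqref{eq:MellinFractional} together with Plancherel's formula for the Mellin transform, the squared error attached to any coefficient vector equals
\begin{equation*}
\frac{1}{2\pi}\int_{\RR}\frac{\big|1+\zeta(\tfrac12+i\tau)\,P(\tfrac12+i\tau)\big|^2}{\tfrac14+\tau^2}\,d\tau .
\end{equation*}
The key structural observation is that $\tau\mapsto P(\tfrac12+i\tau)=\sum_k c_k\theta_k^{1/2}e^{-i\tau\log(1/\theta_k)}$ is a finite exponential sum whose frequencies $\log(1/\theta_k)$ all lie in $[0,L]$ with $L=\log(1/m)$; that is, $P$ is band-limited with bandwidth $L$. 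To exploit this I would test $1+\zeta P$ against a weight $\phi$ in the variable $\tau$ chosen so that the Fourier transform of $\tau\mapsto\zeta(\tfrac12+i\tau)\phi(\tau)$ vanishes on the whole band $[0,L]$; then $\int_\RR\zeta P\,\phi\,d\tau=\sum_k c_k\theta_k^{1/2}\,\widehat{(\zeta\phi)}(\log(1/\theta_k))=0$, and Cauchy--Schwarz against the weight $\tfrac14+\tau^2$ yields, uniformly in the $c_k$,
\begin{equation*}
\delta^2\ \ge\ \frac{\big|\int_\RR\phi(\tau)\,d\tau\big|^2}{2\pi\int_\RR(\tfrac14+\tau^2)\,|\phi(\tau)|^2\,d\tau}.
\end{equation*}
A construction of such a $\phi$ with $\int\phi$ bounded away from $0$ and $\int(\tfrac14+\tau^2)|\phi|^2\ll L$ then produces the desired $\delta^2\gg 1/L$. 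As a sanity check covering the complementary regime, the tail $t>1$ (where $\chi=0$ and each $\{\theta_k/t\}=\theta_k/t$) forces $|\sum_k c_k\theta_k|\le\delta$, while testing the error against $\1_{(0,1)}$ and using $\int_0^1\{\theta/t\}\,dt=\theta(1-\gamma-\log\theta)$ gives $|1+\sum_k c_k\theta_k\log\theta_k|\ll\delta$; these elementary relations already yield $\delta\gg1$ when $m$ is bounded away from $0$, which is exactly why the additive $\log 2$ is enough there.

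Finally I would assemble the probabilistic statement. Inserting the deterministic bound gives $\mathcal{D}_n^2\gg\EE\big[\1_{B_n}/(\log 2+|\log m_n|)\big]$, and Cauchy--Schwarz applied to the pair $\1_{B_n}(\log2+|\log m_n|)^{-1/2}$ and $\1_{B_n}(\log2+|\log m_n|)^{1/2}$ yields
\begin{equation*}
\PP(B_n)^2\ \le\ \EE\Big[\frac{\1_{B_n}}{\log2+|\log m_n|}\Big]\cdot\EE\big[\1_{B_n}(\log2+|\log m_n|)\big]\ \le\ \EE\Big[\frac{\1_{B_n}}{\log2+|\log m_n|}\Big]\big(\log2+\EE|\log m_n|\big),
\end{equation*}
the last inequality because the integrand is nonnegative and $\1_{B_n}\le1$. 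Since Assumption $(\mathcal{P})$ gives $\PP(B_n)>\nu$, dividing produces $\mathcal{D}_n^2\gg \nu^2/(\log2+\EE|\log m_n|)$, which is the claim. The main obstacle is the middle step: constructing the frequency-side weight $\phi$ (equivalently, a time-domain function orthogonal to all $\rho_\theta$ with $\theta\ge m$) whose weighted norm grows only like $\sqrt{L}$. This is an uncertainty-principle/band-limited estimate tied to the Nyman--Beurling machinery; the elementary moment identities alone control only the regime of bounded $m$ and do not by themselves capture the $1/\log(1/m)$ decay as $m\to0$.
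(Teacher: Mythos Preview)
Your reduction and the Cauchy--Schwarz assembly match the paper's argument essentially line for line: restrict to the event $B_n=\{M_n\le 1\}$, bound the pointwise error from below by the deterministic distance $d(m_n)$, and then Cauchy--Schwarz against $\sqrt{\log(2/m_n)}$ to extract the factor $(\log 2+\EE|\log m_n|)^{-1}$.

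The only gap is the ``main obstacle'' you flag: the deterministic inequality $\delta^2\gg 1/\log(2/m)$ for finite combinations of $\rho_{\theta_k}$ with $\min_k\theta_k\ge m$. You try to rederive it via a band-limited Mellin weight $\phi$, but leave the construction open. In fact this is precisely the lower bound of B\'aez-Duarte, Balazard, Landreau and Saias \cite[p.~131]{BDBLS00}, and the paper simply quotes it rather than reproving it. Once you invoke that reference in place of your sketched $\phi$-construction, your proof is complete and coincides with the paper's.
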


\begin{proof}
Let us define 
\bean
B_\lb & = & \left\{ \sum_{k=1}^n c_{k,n} \rho_{\theta_k}, \ n\ge1, c_{k,n}\in\R, 0<\theta_k\le 1, \min_{1\le k\le n}\theta_k \ge\lambda \right\},
\eean
and $d(\lambda)$ the distance in $H$ between $\chi$ and $B_\lb $.
We recall a fundamental inequality obtained in \cite[p.131]{BDBLS00}:
\bean
d(\lb)^2 & \gg & \frac{1}{\log(2/\lb)}.
\eean
We then deduce that for all $n\ge1$, $c_{k,n} \in \R$ and almost surely,
\bean
\left\|\chi(t)-\sum_{k=1}^n c_{k,n}\left\{\frac{Z_{k,n}}{t}\right\}\right\|_H \1_{M_n\le1} & \gg & \frac{\1_{M_n\le1}}{\sqrt{\log(2/m_n)}}.
\eean
Since $(Z_{k,n})_{1\le k\le n, n\ge1}$ satisfies Assumption $(\cal P)$, $\pE \1_{M_n\le1} \gg 1$, and then by Cauchy-Schwarz inequality, 
\bean
1\ll \sqrt{\pE \left| \log\left(\frac{2}{m_n}\right) \right| \1_{M_n\le1}}  \sqrt{\pE \left\|\chi(t)-\sum_{k=1}^n c_{k,n}\left\{\frac{Z_{k,n}}{t}\right\}\right\|_H^2}.
\eean
In particular,
\bean
\cal D_{n} \sqrt{\pE \left|\log\left(\frac{2}{m_n}\right)\right|} & \gg & 1,
\eean
which yields the conclusion by the triangle inequality.
\end{proof}



\subsection{RH implies pNB under concentration} \label{sec:concentrated}

The goal of this section is to prove the following:

\begin{theorem} \label{th:RHpNB}
For any $n\ge 1$, let $(X_{k,n})_{1 \le  k \le  n}$ be r.v. in $L^1_+(\Omega)$ such that, setting $Y_{k,n}=\sqrt{X_{k,n}}$,
\bea
\EE \ Y_{k,n} & = &  \frac{1}{\sqrt{k}},\quad 1 \le  k \le  n, \label{expY}\\
\sup_{1\le k\le n} \var Y_{k,n} & \ll &  n^{-3-\vartheta},
\label{varY}\\
\bP(Y_{1,n} \ge 1) & \le & 1-\nu, \label{1erva}
\eea
for some $\vartheta>0$ and $\nu\in(0,1)$. Therefore, 
RH holds if and only if $(X_{k,n})_{n\ge1,k \in\llbracket 1,n \rrbracket}$ satisfies pNB.
\end{theorem}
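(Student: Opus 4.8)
The two implications are proved separately, the forward one being a short consequence of Theorem~\ref{th:pNBtoRH} and the converse being the substantial part.

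\emph{pNB $\Longrightarrow$ RH.} I would deduce this from Theorem~\ref{th:pNBtoRH} by showing that the concentration hypotheses \eqref{expY}--\eqref{1erva} force Assumption $(\cal P)$ to hold for all large $n$. For $k\ge 2$ one has $\EE Y_{k,n}=1/\sqrt k\le 1/\sqrt 2<1$, so Chebyshev's inequality gives
\begin{equation*}
\PP(Y_{k,n}>1)\le \frac{\var Y_{k,n}}{(1-1/\sqrt 2)^2}\ll \var Y_{k,n}\ll n^{-3-\vartheta},\qquad \PP(Y_{k,n}=0)\le k\,\var Y_{k,n}\ll k\,n^{-3-\vartheta},
\end{equation*}
while \eqref{1erva} bounds the single term $k=1$ by $\PP(Y_{1,n}\ge 1)\le 1-\nu$. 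Writing $B_n=\bigcap_{k=1}^n\{0<X_{k,n}\le 1\}=\bigcap_{k=1}^n\{0<Y_{k,n}\le 1\}$ and summing these estimates over $k$ in a union bound yields $\PP(B_n)\ge \nu-o(1)$, hence $\PP(B_n)\ge \nu/2$ for $n$ large. Since pNB gives $\cal D_n\to 0$, for large $n$ both $\cal D_n^2$ is arbitrarily small and $\PP(B_n)$ is bounded below, which is exactly the input used in the proof of Theorem~\ref{th:pNBtoRH} (that Erd\"os-type argument needs only a single suitable index $n$); the resulting deterministic configuration then gives RH through the classical criterion, Theorem~\ref{th:NBcriterion}.

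\emph{RH $\Longrightarrow$ pNB.} Here the idea is to transport a good deterministic B\'aez--Duarte approximation to the random setting and absorb the perturbation coming from replacing $1/k$ by $X_{k,n}$. Fixing coefficients $(c_{k,n})$ and using that $\chi$ and the deterministic $\rho_{1/k}$ are constant in $\omega$, the triangle inequality in $\cal H$ splits
\begin{equation*}
\cal D_n\le \Big\|\chi-\sum_{k=1}^n c_{k,n}\rho_{1/k}\Big\|_H+\Big\|\sum_{k=1}^n c_{k,n}\big(\rho_{X_{k,n}}-\rho_{1/k}\big)\Big\|_{\cal H}.
\end{equation*}
Under RH the first term is the ordinary B\'aez--Duarte distance, which tends to $0$ by Theorem~\ref{th:BDcriterion}; I would take for $(c_{k,n})$ the explicit mollified M\"obius coefficients provided by a \emph{quantitative} form of that criterion, so that this term vanishes while the coefficients remain uniformly bounded, $|c_{k,n}|\le C$, and hence $\sum_{k\le n}c_{k,n}^2\ll n$. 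For the random term I would rely on a H\"older estimate for $\theta\mapsto\rho_\theta$, namely $\|\rho_\alpha-\rho_\beta\|_H^2\ll|\alpha-\beta|$ whenever $\alpha/\beta$ stays bounded (by the scaling identity $\|\rho_\alpha-\rho_\beta\|_H^2=\beta\int_0^\infty(\{(\alpha/\beta)v\}-\{v\})^2v^{-2}\,dv$ this reduces to the behaviour as $\alpha/\beta\to1$, possibly up to a harmless logarithmic factor), together with the crude global bound $\|\rho_\alpha-\rho_\beta\|_H^2\ll\alpha+\beta$. Combining these with the square-root structure, Cauchy--Schwarz gives $\EE|X_{k,n}-1/k|=\EE\big(|Y_{k,n}-1/\sqrt k|\,|Y_{k,n}+1/\sqrt k|\big)\le\sqrt{\var Y_{k,n}}\,\sqrt{\var Y_{k,n}+4/k}\ll n^{-(3+\vartheta)/2}k^{-1/2}$, so that $\|\rho_{X_{k,n}}-\rho_{1/k}\|_{\cal H}^2\ll n^{-(3+\vartheta)/2}k^{-1/2}$ (the rare deviations of $X_{k,n}$ away from $1/k$ being controlled by the global bound and Chebyshev). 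A second Cauchy--Schwarz over $k$ then yields
\begin{equation*}
\Big\|\sum_{k=1}^n c_{k,n}\big(\rho_{X_{k,n}}-\rho_{1/k}\big)\Big\|_{\cal H}\le\Big(\sum_{k\le n}c_{k,n}^2\Big)^{1/2}\Big(\sum_{k\le n}\|\rho_{X_{k,n}}-\rho_{1/k}\|_{\cal H}^2\Big)^{1/2}\ll n^{1/2}\cdot n^{-(2+\vartheta)/4}=n^{-\vartheta/4}\to 0.
\end{equation*}
Hence $\cal D_n\to 0$ and pNB holds. This computation also explains the calibration of the exponent $3+\vartheta$: the variance budget $n^{-3-\vartheta}$ exactly offsets the $n$ terms, the weights $k^{-1/2}$, and the coefficient growth $\sum_{k\le n}c_{k,n}^2\ll n$, the surplus $\vartheta>0$ producing the decay.

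I expect the main obstacle to be the converse direction, and within it the existence and compatibility of a quantitative B\'aez--Duarte approximation. Because the $\rho_{1/k}$ are strongly linearly dependent (an ill-conditioned Gram matrix), a bare application of Theorem~\ref{th:BDcriterion} gives no control on the coefficient sizes, yet one needs $\sum_{k\le n}c_{k,n}^2\ll n^{1+\vartheta/2}$ for the perturbation term to vanish; producing an approximating sequence that is simultaneously distance-optimal under RH and $\ell^2$-controlled, and then balancing its growth against the variance bound, is the delicate point. Establishing the H\"older estimate $\|\rho_\alpha-\rho_\beta\|_H^2\ll|\alpha-\beta|$ uniformly for $\alpha,\beta$ near $1/k$ is a secondary, purely analytic difficulty.
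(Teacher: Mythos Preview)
Your forward direction (pNB $\Rightarrow$ RH) coincides with the paper's: both verify Assumption~$(\cal P)$ via Chebyshev for $k\ge 2$ and hypothesis~\eqref{1erva} for $k=1$, then invoke Theorem~\ref{th:pNBtoRH}.

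For RH $\Rightarrow$ pNB you take a genuinely different route. The paper passes to the Mellin side by Plancherel applied to the \emph{entire} perturbation $\sum_k \mu(k)k^{-\ve}(\rho_{X_{k,n}}-\rho_{1/k})$, uses the pointwise inequality $|X^{s}-k^{-s}|\le 2|s|\,|Y-k^{-1/2}|$ on the line $\sigma=\tfrac12$ (this is where $\sqrt{X}$ enters for them), splits the $t$--integral at a moving threshold $T_n$, and controls the central part through a second consequence of RH, the Lindel\"of hypothesis (Theorem~\ref{th:Lindelof}), which gives $\int_{-T}^{T}|\zeta(\tfrac12+it)|^2\,dt\ll T^{1+\eta}$. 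You instead stay in physical space, bound each individual $\|\rho_{X_{k,n}}-\rho_{1/k}\|_{\cal H}$ by a H\"older--type estimate, and then combine via triangle and Cauchy--Schwarz over $k$. Your approach is more modular and does not invoke Lindel\"of as a separate ingredient; the paper's keeps the sum intact and so avoids the loss from the triangle inequality, which makes the threshold tuning cleaner. Both rely on the same quantitative B\'aez--Duarte input (Theorem~\ref{th:quantitativeBD} with coefficients $-\mu(k)k^{-\ve}$), which is precisely the ``bounded--coefficient approximation under RH'' you ask for.

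One technical caveat worth making explicit: the bound $\|\rho_\alpha-\rho_\beta\|_H^2\ll|\alpha-\beta|$ is \emph{false} as written; the correct estimate for $\alpha/\beta$ near $1$ is $\|\rho_\alpha-\rho_\beta\|_H^2\asymp|\alpha-\beta|\log\!\big(\beta/|\alpha-\beta|\big)$, essentially because $\int_0^T|\zeta(\tfrac12+it)|^2\,dt\sim T\log T$. You anticipate this (``harmless logarithmic factor''), and indeed the extra $\log n$ is absorbed by your surplus $n^{-\vartheta/4}$. Your outlier step can also be completed without higher moments of $X_{k,n}$: on the bad event one writes $X_{k,n}=Y_{k,n}^2\le 2(Y_{k,n}-k^{-1/2})^2+2/k$, so the crude bound $\|\rho_\alpha-\rho_\beta\|_H^2\ll\alpha+\beta$ yields a contribution $\ll \var Y_{k,n}+k^{-1}\PP(|Y_{k,n}-k^{-1/2}|>c\,k^{-1/2})\ll n^{-3-\vartheta}$, which is negligible.
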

One can check for instance that the r.v. $ Y_{k,n}\sim\Gamma\left(\frac{n^{3+\vartheta}}{k},\frac{n^{3+\vartheta}}{\sqrt{k}}\right)$, $\vartheta>0$, satisfy Conditions (\ref{expY}), (\ref{varY}) and \eqref{1erva}, see Section \ref{sec:conclusion} for discussions about examples.\\

In order to prove Theorem~\ref{th:RHpNB}, RH will be used at two different places and in two different ways, first via an explicit version of B\'aez-Duarte criterion; combining Proposition 1, 2 and 3 in \cite{BdR10}, one obtains

\begin{theorem}[\cite{BdR10}] \label{th:quantitativeBD}
For $\ve >0$ and $n \ge 1$, set
\begin{equation}
\nu_{n,\ve} = \left\|\chi(t)+\sum_{k \le n} \mu(k) k^{-\ve} \left\{\frac{1}{kt}\right\} \right\|_{H}^2.
\end{equation}
Under RH, the following limit holds
\begin{equation*}
\limsup_{n \rightarrow \infty} \nu_{n,\ve} \xrightarrow[\ve \rightarrow 0]{} 0.
\end{equation*}
\end{theorem}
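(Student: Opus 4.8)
The plan is to transfer the whole computation to the Mellin–Plancherel side, where the fractional-part structure is made explicit by \eqref{eq:MellinFractional}. On the critical line $s=\tfrac12+i\tau$ one has $\widehat{\chi}(s)=\int_0^1 t^{s-1}dt=1/s$ together with $\widehat{\rho_{1/k}}(s)=-k^{-s}\zeta(s)/s$, so the approximant $g_{n,\ve}=\chi+\sum_{k\le n}\mu(k)k^{-\ve}\rho_{1/k}$ (whose norm is exactly $\nu_{n,\ve}$) has Mellin transform
\[
\widehat{g_{n,\ve}}(s) = \frac{1}{s}\left(1 - \zeta(s)\,M_n(s+\ve)\right), \qquad M_n(w) := \sum_{k\le n}\mu(k)\,k^{-w}.
\]
Applying the Mellin–Plancherel isometry on the line $\sg=\tfrac12$ (the form of Parseval adapted to $H=L^2(0,\infty)$) turns the statement into a single contour integral:
\[
\nu_{n,\ve} = \frac{1}{2\pi}\int_{-\infty}^{\infty}\frac{\left|\,1 - \zeta(\tfrac12+i\tau)\,M_n(\tfrac12+\ve+i\tau)\,\right|^2}{\tfrac14 + \tau^2}\,d\tau .
\]

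The first step is the limit $n\to\infty$ for fixed $\ve$. Here RH enters: on the line $\sg=\tfrac12+\ve>\tfrac12$ the Dirichlet series $\sum_k\mu(k)k^{-w}$ converges to $1/\zeta(w)$ (convergence of the M\"obius series for $\sg>\tfrac12$ being itself equivalent to RH, see \cite{Tit86}). Hence $M_n(\tfrac12+\ve+i\tau)\to 1/\zeta(\tfrac12+\ve+i\tau)$ pointwise in $\tau$, and the integrand converges pointwise to $\tfrac1{1/4+\tau^2}\left|1-\zeta(\tfrac12+i\tau)/\zeta(\tfrac12+\ve+i\tau)\right|^2$. The goal of this step is the bound $\limsup_n\nu_{n,\ve}\le I(\ve)$, where
\[
I(\ve) = \frac{1}{2\pi}\int_{-\infty}^{\infty} \frac{1}{\tfrac14+\tau^2}\left|\,1 - \frac{\zeta(\tfrac12+i\tau)}{\zeta(\tfrac12+\ve+i\tau)}\,\right|^2 d\tau ;
\]
to pass the $\limsup$ through the integral I would use a reverse-Fatou (dominated-majorant) argument, which reduces to producing an $n$-uniform integrable majorant of the integrand for fixed $\ve$.

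The second step is $\ve\to0$. Since, under RH, the line $\sg=\tfrac12$ meets only the on-line zeros of $\zeta$, for every $\tau$ which is not the ordinate of a zero we have $\zeta(\tfrac12+\ve+i\tau)\to\zeta(\tfrac12+i\tau)\neq0$, so the ratio tends to $1$ and the integrand of $I(\ve)$ tends to $0$ almost everywhere. A dominated-convergence argument then gives $I(\ve)\to0$; what makes domination plausible is that near a zero $\rho=\tfrac12+i\gamma$ a first-order expansion yields $1-\zeta(\tfrac12+i\tau)/\zeta(\tfrac12+\ve+i\tau)\approx \ve/\bigl(\ve+i(\tau-\gamma)\bigr)$, whose modulus never exceeds $1$, so the numerator stays bounded precisely where the denominator's zero is approached. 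Combining $\limsup_n\nu_{n,\ve}\le I(\ve)$ with $\lim_{\ve\to0}I(\ve)=0$ gives the claim.

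The main obstacle is the uniform control needed to license the two interchanges of limit and integral, and it is concentrated in the first step. The partial sums $M_n(\tfrac12+\ve+i\tau)$ are only conditionally convergent on $\sg=\tfrac12+\ve$, so building a majorant that is simultaneously uniform in $n$, integrable against $d\tau/(\tfrac14+\tau^2)$, and compatible with the polynomial growth of $\zeta(\tfrac12+i\tau)$ requires genuine quantitative estimates for both $\zeta$ and the M\"obius partial sums on vertical lines just to the right of the critical line — exactly the quantitative input assembled from the three propositions of \cite{BdR10}. The $\ve\to0$ step is softer but still demands a dominator uniform in $\ve$ that handles at once the accumulation of zeros and the large-$\tau$ regime.
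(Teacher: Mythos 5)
Your setup is correct and is in fact the same framework as the cited source: the Mellin--Plancherel identity $\nu_{n,\ve}=\frac{1}{2\pi}\int_{\RR}\frac{|1-\zeta(\frac12+i\tau)M_n(\frac12+\ve+i\tau)|^2}{\frac14+\tau^2}\,d\tau$ is exactly Proposition 1 of \cite{BdR10}, and the two-stage limit ($n\to\infty$ at fixed $\ve$, then $\ve\to0$) is exactly the division of labour between their Propositions 2 and 3. (Note the paper under review gives no proof of this statement at all: it simply quotes it as the combination of Propositions 1--3 of \cite{BdR10}.) The problem is that your proposal stops at precisely the point where the theorem begins. Both limit interchanges are left as ``produce a suitable majorant,'' and you concede in your final paragraph that these majorants are ``exactly the quantitative input assembled from the three propositions of \cite{BdR10}.'' That is a reduction of the theorem to itself, not a proof. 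Worse, the first interchange cannot be carried out by a majorant over the whole line as you propose: under RH the available uniform bounds are of the shape $|\zeta(\tfrac12+i\tau)|\ll(1+|\tau|)^{\eta}$ (Lindel\"of) and, via partial summation from $M(x)=\sum_{k\le x}\mu(k)\ll x^{1/2+\delta}$, $|M_n(\tfrac12+\ve+i\tau)-1/\zeta(\tfrac12+\ve+i\tau)|\ll_{\ve}(1+|\tau|)\,n^{\delta-\ve}$. Feeding these into your integrand gives a bound of order $(1+|\tau|)^{2\eta}$, which is \emph{not} integrable against $d\tau/(\tfrac14+\tau^2)$; no $n$-uniform integrable majorant is in sight. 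The missing idea is a truncation of the contour at a height $T_n$ growing with $n$, with the inner range handled by the $n^{\delta-\ve}$ decay and the outer range by the $1/|s|^2$ decay --- this is the mechanism used both in \cite{BdR10} and in this paper's own proof of Theorem~\ref{th:RHpNB}, where the splitting at $T_n=n^{\alpha}$ is carried out explicitly.

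The second stage has the same defect. Your pointwise analysis near a simple zero (the modulus-$\le1$ bound for $\ve/(\ve+i(\tau-\gamma))$) is a local heuristic; dominated convergence requires a bound on $|1-\zeta(\tfrac12+i\tau)/\zeta(\tfrac12+\ve+i\tau)|^2/(\tfrac14+\tau^2)$ that is uniform in $\ve$ \emph{and} integrable in $\tau$, hence a control of $1/\zeta(\tfrac12+\ve+i\tau)$ for large $\tau$ that does not degenerate as $\ve\to0^{+}$. Under RH the standard estimate $1/\zeta(\sigma+it)\ll_{\delta,\eta}t^{\eta}$ holds only for fixed $\sigma\ge\tfrac12+\delta$, with constants blowing up as $\delta\to0$, so this uniformity is a genuine theorem (Proposition 3 of \cite{BdR10}) and not a routine verification. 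In summary: your proposal correctly reconstructs the architecture of the known proof, but every analytic estimate that makes the architecture function is missing, and at least one of the steps (the $n$-uniform majorant over all of $\RR$) would fail as literally stated.
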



In order to prove Theorem~\ref{th:RHpNB}, we will use some information about the coefficients in these linear combinations, namely that $|\mu(n)|\le 1$.
RH will be also used via the Lindel\"of hypothesis about the rate of growth of the $\zeta$ function on the critical line (see \cite[p. 336-337]{Tit86}):
\begin{theorem}[\cite{Tit86}] \label{th:Lindelof}
Under RH, the Lindel\"of hypothesis holds:
\begin{equation}
\left| \zeta\left(1/2+it\right)\right| \ll_\eta t^{\eta}, \quad \eta>0.
\end{equation}
\end{theorem}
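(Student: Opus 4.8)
The plan is to show, under RH, that the Lindel\"of $\mu$-function
$\mu(\sigma):=\limsup_{t\to\infty}\frac{\log|\zeta(\sigma+it)|}{\log t}$
vanishes at $\sigma=1/2$, which is precisely the asserted bound $|\zeta(1/2+it)|\ll_\eta t^\eta$ for every $\eta>0$. I would first recall the unconditional structural facts about $\mu$: it is non-negative, non-increasing and convex (by the Phragm\'en--Lindel\"of principle), with $\mu(\sigma)=0$ for $\sigma\ge 1$ and $\mu(1/2)\le 1/4$ (the convexity bound coming from the functional equation and Stirling's formula, see \cite{Tit86}). Being convex and finite, $\mu$ is continuous; hence it suffices to prove $\mu(\sigma)=0$ for every fixed $\sigma\in(1/2,1)$ and then let $\sigma\downarrow 1/2$.

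Fix such a $\sigma$ and a large height $t$. The only place RH enters is the following: since all non-trivial zeros lie on $\{\sigma=1/2\}$, the function $\zeta$ is zero-free on the half-plane $\{\sigma>1/2\}$, so a single-valued holomorphic branch $g(z)=\log\zeta(z)$ exists on any disk contained in that half-plane, and I fix the principal branch. I would work on disks centred at $c=2+it$, where $g(c)=\log\zeta(2+it)=O(1)$. Two elementary a priori bounds are needed. First, on the disk $|z-c|\le 1-\ve$ one stays in $\{\operatorname{Re}z\ge 1+\ve\}$, where the Dirichlet series gives $|g(z)|\le\sum_{p,k}\tfrac1k p^{-k\operatorname{Re}z}\le\log\zeta(1+\ve)=O_\ve(1)$. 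Second, on the disk $|z-c|\le 3/2-\ve$ one stays in $\{\operatorname{Re}z\ge 1/2+\ve\}$, where the unconditional convexity bound gives $\operatorname{Re}g=\log|\zeta|\ll\log t$; applying the Borel--Carath\'eodory theorem on a marginally larger disk (still inside the zero-free region, using $g(c)=O(1)$) then upgrades this to $|g(z)|\ll\log t$ on the whole disk.

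The heart of the argument is to interpolate these two bounds by the Hadamard three-circles theorem, which asserts that $r\mapsto\log M(r)$ is convex in $\log r$, where $M(r)=\max_{|z-c|=r}|g(z)|$. Choosing $\ve$ small enough that $1/2+\ve<\sigma$, the point $\sigma+it$ lies on the circle of radius $r=2-\sigma$, which satisfies $1-\ve<r<3/2-\ve$. Three circles, together with $\log M(1-\ve)=O_\ve(1)$ and $\log M(3/2-\ve)\le\log\log t+O(1)$, then yields $\log M(r)\le\theta\,\log\log t+O_\ve(1)$ with $\theta=\frac{\log(r/(1-\ve))}{\log((3/2-\ve)/(1-\ve))}\in(0,1)$. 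Consequently $\log|\zeta(\sigma+it)|\le M(r)\ll_\ve(\log t)^{\theta}=o(\log t)$, so $\mu(\sigma)=0$, and letting $\sigma\downarrow 1/2$ gives $\mu(1/2)=0$ by continuity of $\mu$.

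The main obstacle, and the only genuinely deep input, is guaranteeing that $g=\log\zeta$ is single-valued and holomorphic on the relevant disks reaching down to $\operatorname{Re}z$ just above $1/2$; this is exactly where RH is used, through the absence of zeros in $\{\sigma>1/2\}$. Everything else is unconditional: the convexity bound $\log|\zeta|\ll\log t$ in the strip, the Borel--Carath\'eodory theorem and the Hadamard three-circles theorem. I expect only routine verifications to remain, namely that for large $t$ the disks avoid the pole at $s=1$, and that the continuity of the convex function $\mu$ legitimately transfers $\mu(\sigma)=0$ for $\sigma>1/2$ down to $\mu(1/2)=0$. The crucial subpolynomial saving $(\log t)^{\theta}$ over the convexity bound $\log t$ is produced entirely by the three-circles interpolation between the $O(1)$ regime near $\operatorname{Re}z>1$ and the $O(\log t)$ regime near the critical line.
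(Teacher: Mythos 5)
Your proposal is correct, and it is essentially the same argument as the paper's proof, which consists of citing the classical result in \cite{Tit86} (pp.~336--337): there, as in your write-up, one applies the Hadamard three-circles theorem together with Borel--Carath\'eodory to a branch of $\log\zeta$ on disks centred at $2+it$ reaching into the half-plane $\sigma>1/2$ (zero-free under RH, and avoiding the pole at $s=1$ for large $t$), obtaining $\log|\zeta(\sigma+it)|=o(\log t)$ for each fixed $\sigma>1/2$, and then descends to $\sigma=1/2$ via the convexity and continuity of the Lindel\"of $\mu$-function. Since the paper offers no independent proof beyond this citation, there is nothing further to compare.
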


We now turn to the proof of Theorem~\ref{th:RHpNB}.

\begin{proof}
To prove that pNB implies RH, by virtue of Theorem \ref{th:pNBtoRH}, it suffices to show that the family $(X_{k,n})_{1 \le  k \le  n}$ satisfies Assumption ($\cal P$). First, by union bound and Chebyshev's inequality, we have
\bean
\bP\left(\bigcup_{k=2}^n \{Y_{k,n} \ge 1 \}\right)
 \le \sum_{k=2}^n \bP(Y_{k,n} \ge 1) \le \sum_{k=2}^n \frac{\var Y_{k,n}}{(1-1/\sqrt{k})^2} \le
\frac{1}{(1-1/\sqrt{2})^2} \frac{n}{n^{3+\vartheta}} \le \frac{12}{n^{2+\vartheta}} \xrightarrow[n \rightarrow \infty]{} 0.
\eean
Moreover, we have $\bP(Y_{1,n} \ge 1)\le 1-\nu$ for all $n\ge1$. Therefore, for all $n$ sufficiently large, 
\bean
\bP\left(\bigcup_{k=1}^n \{Y_{k,n} \ge 1 \}\right) & \le & 1-\nu/2,
\eean
and then, taking the complement, ($\cal P$) holds.\\

Let us now prove that pNB($(X_{k,n})$) holds under RH. We have 
\begin{eqnarray*}
\inf_{a_1,\cdots,a_n}\EE\left\| \chi - \sum_{k=1}^n a_k \rho_{X_{k,n}}\right\|^2_H & \le & \EE\left\| \chi + \sum_{k=1}^n \mu(k) k^{-\ve} \rho_{X_{k,n}}\right\|^2_H \\
	& \le & \EE\left\| \chi + \sum_{k=1}^n \mu(k) k^{-\ve} \rho_{1/k} + \sum_{k=1}^n \mu(k) k^{-\ve} (\rho_{X_{k,n}}- \rho_{1/k})\right\|^2_H \\
	& \ll & \nu_{n,\ve} + \EE \left\| \sum_{k=1}^n\mu(k) k^{-\ve} (\rho_{X_{k,n}}-\rho_{1/k}) \right\|^2_H = \nu_{n,\ve}+ \cal R_{n,\ve}.
\end{eqnarray*}

It thus remains to study $\cal R_{n,\ve}$.
Using Plancherel's formula, see \cite[Prop.1]{BdR10}, we obtain
\begin{equation} \label{Rn}
\mathcal{R}_{n,\ve}  =  \EE\left\| \sum_{k=1}^n \mu(k) k^{-\ve} \left( \frac{1}{k^s}-X_{k,n}^s\right) \frac{\zeta(s)}{s} \right\|^2_{L^2}  =  \EE\int_{-T_n}^{T_n} V_n(t)dt+ \EE\int_{|t|\ge  T_n} V_n(t)dt,
\end{equation}
where
\begin{equation*}
V_n(t)  =  \left| \sum_{k=1}^n \mu(k) k^{-\ve} \left( \frac{1}{k^s}-X_{k,n}^s\right) \frac{\zeta(s)}{s}  \right|^2, \quad s=\frac12+it,
\end{equation*}
and where the parameter $T_n$ is to be chosen later. The quantity $V_n(t)$ depends on $\e$, but we do not mention this dependence as a subscript since we will bound it independently of $\e$ just below. 

\medskip

Let us recall the following useful inequality for $a,b\in\RR$, $\Re(s)=1/2$,
\bea \label{majexp}
|e^{as}-e^{bs}|=\left|\int_a^b s e^{us}du\right| \le |s| \left|\int_a^b e^{u/2}du \right| =2|s| \left| e^{a/2}-e^{b/2}\right|.
\eea
Using the Cauchy-Schwarz inequality, $|\mu(k) k^{-\ve}|\le 1$ and (\ref{majexp}), we obtain
\begin{eqnarray}
V_n(t) & \le & n \sum_{k=1}^n \left| \frac{1}{k^s}-X_{k,n}^s \right|^2  \frac{|\zeta(s)|^2}{|s|^2} \label{v1}\\
   & \le & 4 n \sum_{k=1}^n \left| \frac{1}{\sqrt{k}}-Y_{k,n} \right|^2  |\zeta(s)|^2.  \label{v2}
\end{eqnarray}

Let us consider the term $\EE\int_{-T_n}^{T_n}$ in (\ref{Rn}).
From (\ref{v2}), the Lindel\"of hypothesis (cf. Theorem~\ref{th:Lindelof}) written as $|\zeta(s)|\ll t^{\eta/2}$, and (\ref{varY}), we can write for any $\eta>0$,
\bea
\nonumber \EE\int_{-T_n}^{T_n} V_n(t)dt & \le & 4 n \sum_{k=1}^n \var(Y_{k,n})  \int_{-T_n}^{T_n}  |\zeta(s)|^2dt \\
\nonumber    & \ll & n^2\ \sup_{1\le k \le n}\var(Y_{k,n}) \ T_n^{1+\eta} \\
\label{t1}	& \ll & n^{-1-\vartheta}\ T_n^{1+\eta}.
\eea

We now study the term $\EE\int_{|t|\ge  T_n}$. From (\ref{v1}), we obtain
\begin{equation*}
\EE V_n(t)  \le  2 n \sum_{k=1}^n \left( \frac{1}{k}+\EE X_{k,n} \right)  \frac{|\zeta(s)|^2}{|s|^2}.
\end{equation*}
But $\EE X_{k,n}= \left(\EE Y_{k,n}\right)^2 + \var(Y_{k,n})\ll 1/k$, therefore, for any $\eta\in(0,1)$,
\bea
\label{t2} \EE\int_{|t|\ge  T_n} V_n(t)dt  &\ll & n \log n\  T_n^{\eta-1}.
\eea

We finally need to tune $\eta$ and $T_n$ accordingly. Recall that $\vartheta>0$ is given. Choose $\eta>0$ such that $ \frac{1+\eta}{1-\eta}<1+\vartheta/2$, and $\alpha>1$ so that 
\begin{equation*}
0<\frac{1}{1-\eta} < \alpha < \frac{1+\vartheta/2}{1+\eta}.
\end{equation*}
Set $T_n=n^\alpha$. Hence, from \eqref{t1} and \eqref{t2}
\bean
\EE\int_{|t|\ge  T_n} V_n(t)dt  & \ll & n^{1-\alpha(1-\eta)} \log n \xrightarrow[n\to \infty]{}0, \\
\EE\int_{-T_n}^{T_n} V_n(t)dt  & \ll & n^{-\vartheta/2} \xrightarrow[n\to \infty]{}0.
\eean
Finally, we conclude with Theorem \ref{th:quantitativeBD}.
\end{proof}

\section{The gNB criterion} \label{sec:dilated}

\subsection{Proof of the sufficient part in the deterministic criterion.}

It is first important to write a short proof of the sufficient implication in the NB criterion, stated in Theorem~\ref{th:NBcriterion} since the proof of Theorem~\ref{th:gNBtoRH} will follow a similar structure.
Let us prove that RH holds if
\begin{equation}\label{eq:NBcriterion}
\chi \in \span_H\left\{ \rho_\theta, \ 0 <\theta \le 1 \right\},
\end{equation}
by adapting the original proof, see e.g. \cite{Beu55}, with \cite{BDBLS00}. 
 
\begin{proof}
We recall the proof of this sufficient condition by Nyman and completed by an argument in \cite{BDBLS00}, see \cite[Lemme 1 \& Prop. 1 p.133]{BDBLS00}. Assume that (\ref{eq:NBcriterion}) is satisfied, i.e. that there exist coefficients $c_k=c_{k,n}$ and $\theta_k=\theta_{k,n}$ such that 
\begin{equation} \label{eq:NBexplicite}
d_n^2=\int_0^\infty \left( \chi(t) - \sum_{k=1}^n c_k \left\{ \frac{\theta_k}{t} \right\}\right)^2 dt  \xrightarrow[n\to\infty]{}  0.
\end{equation}
Let $s \in \mathbb{C}$ be such that $1/2<\sigma<1$ and assume for contradiction that $\zeta(s)=0$. We have:
\begin{equation}\label{eq:IdentiteNyman}
\int_0^\infty \left( \chi(t) - \sum_{k=1}^n c_k \left\{ \frac{\theta_k}{t} \right\}\right)t^{s-1}dt = \frac{1}{s}+\frac{\zeta(s)}{s}\sum_{k=1}^n c_k \theta_k^s = \frac{1}{s}.
\end{equation}
By Cauchy-Schwarz inequality, 
\bean
\left|\int_0^1 \left( \chi(t) - \sum_{k=1}^n c_k \left\{ \frac{\theta_k}{t} \right\}\right)t^{s-1}dt\right|^2 & \le & 
\int_0^1 \left( \chi(t) - \sum_{k=1}^n c_k \left\{ \frac{\theta_k}{t} \right\}\right)^2 dt \int_0^1 t^{2\sigma-2}dt \\
     & \le & \frac{d_n^2}{2\sigma-1}.
\eean
Moreover 
\bean
\left|\int_1^\infty \left( \chi(t) - \sum_{k=1}^n c_k \left\{ \frac{\theta_k}{t} \right\}\right)t^{s-1}dt\right|^2  = 
\left|\int_1^\infty  \sum_{k=1}^n c_k \theta_k t^{s-2}dt\right|^2  \le  \frac{1}{(1-\sigma)^2}\left|\sum_{k=1}^n c_k \theta_k \right|^2.
\eean
But 
\bean
\left|\sum_{k=1}^n c_k \theta_k \right|^2 =  \int_1^\infty \left( \chi(t) - \sum_{k=1}^n c_k \left\{ \frac{\theta_k}{t} \right\}\right)^2 dt  \le  d_n^2.
\eean
Hence, 
\bean
\int_0^\infty \left( \chi(t) - \sum_{k=1}^n c_k \left\{ \frac{\theta_k}{t} \right\}\right)t^{s-1}dt & \xrightarrow[n\to\infty]{} & 0,
\eean
which contradicts Eq. (\ref{eq:IdentiteNyman}).
\end{proof}

\subsection{gNB implies RH under a controlled approximation}

We can replace $\chi$ in Definition \ref{def:pNBgNB} by a more general function $\phi$. We say that $\phi : (0,\infty) \rightarrow \R$ is an admissible target function if 
\begin{enumerate}
    \item[(T1)] $\d \widehat{\phi}(s)=\int_0^\infty\phi(t)t^{s-1}dt$ exists and does not vanish in the strip $\frac{1}{2} < \sigma < 1$,
    \item[(T2)] $\d \sup_{M>0} \left(M \int_{M}^\infty \phi(t)^2 dt\right) < \infty$.
\end{enumerate}

\begin{theorem} \label{th:gNBtoRH}
Let $(Z_{k,n})_{1\le k\le n, n \ge 1}$ be r.v. in $L^2_+(\Omega)$ satisfying, for any $\epsilon >0$,
\bea \label{moment}
\sum_{k=1}^n \|Z_{k,n}\|_2^{1+\epsilon} \ll_\epsilon 1.
\eea
Let $\phi : (0,\infty) \rightarrow \R$ be an admissible target function.
We suppose that there exist coefficients $(c_{k,n})_{1 \le k \le n, n \ge 1}$ such that 
\begin{enumerate}
    \item[(gNB)]
$\d D_n^2 = \int_0^\infty \left|\phi(t)-\sum_{k=1}^n c_{k,n}\pE\left\{\frac{Z_{k,n}}{t}\right\} \right|^2 dt \xrightarrow[n \rightarrow \infty]{} 0$ ;
\item[(C)] For any $M_n\to\infty$, 
\bea \label{eq:cknBound}
\sum_{k=1}^n |c_{k,n}|^2\PP(Z_{k,n} \ge M_n) \xrightarrow[n\to\infty]{} 0.
\eea
\end{enumerate}
Then RH holds.
\end{theorem}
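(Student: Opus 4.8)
The plan is to follow the structure of the deterministic sufficiency proof given just above, the essential new difficulty being that the dilation variables $Z_{k,n}$ are supported on all of $(0,\infty)$ rather than on $(0,1]$. First I would establish the probabilistic analogue of \eqref{eq:MellinFractional}: by Fubini (legitimate because $\{Z_{k,n}/t\}\le\min(1,Z_{k,n}/t)$ together with $\EE Z_{k,n}<\infty$ makes $t\mapsto\EE\{Z_{k,n}/t\}\,t^{\sigma-1}$ integrable on $(0,\infty)$ for $1/2<\sigma<1$),
\[ \int_0^\infty \EE\left\{\frac{Z_{k,n}}{t}\right\}t^{s-1}\,dt=-\EE[Z_{k,n}^s]\,\frac{\zeta(s)}{s}, \]
the moment $\EE[Z_{k,n}^s]$ being finite since $\EE[Z_{k,n}^\sigma]\le(\EE Z_{k,n})^\sigma$ by concavity. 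Writing $g_n=\phi-\sum_{k=1}^n c_{k,n}\EE\{Z_{k,n}/\cdot\}$, so that $\|g_n\|_H=D_n$, and assuming for contradiction that $\zeta(s)=0$ for some $s=\sigma+i\tau$ with $1/2<\sigma<1$, the factor $\zeta(s)$ annihilates the entire sum and leaves $\int_0^\infty g_n(t)\,t^{s-1}\,dt=\widehat\phi(s)$ for every $n$, a nonzero constant by (T1). It then suffices to show that the left-hand side tends to $0$.

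Next I would split $\int_0^\infty=\int_0^M+\int_M^\infty$ at a level $M$ to be tuned. On $(0,M)$ the weight $t^{s-1}$ is square-integrable, so Cauchy--Schwarz gives $\big|\int_0^M g_n t^{s-1}dt\big|\le D_n\,(M^{2\sigma-1}/(2\sigma-1))^{1/2}$. The tail is the real issue: since $\sigma>1/2$ we have $t^{s-1}\notin L^2(M,\infty)$, so the bare $L^2$-smallness of $g_n$ is useless there. To expose integrable behaviour I would extract the harmonic part of each $\EE\{Z_{k,n}/t\}$ through $\{x\}=x-\lfloor x\rfloor$, writing
\[ g_n(t)=\phi(t)-\frac{A_n}{t}+R_n(t),\qquad A_n=\sum_{k=1}^n c_{k,n}\EE Z_{k,n},\quad R_n(t)=\sum_{k=1}^n c_{k,n}\EE\lfloor Z_{k,n}/t\rfloor. \]

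For the remainder, $\lfloor Z/t\rfloor\le (Z/t)\1_{Z\ge t}$ and Markov's inequality at exponent $\alpha$ give, via \eqref{moment}, $\EE\lfloor Z_{k,n}/t\rfloor\ll_\alpha (kt)^{-\alpha}$; choosing $\alpha>(\beta+1)/2$ so that $\sum_k k^{\beta-2\alpha}$ converges, Cauchy--Schwarz and condition~(C) yield $\sum_k|c_{k,n}|k^{-\alpha}\ll1$, hence $|R_n(t)|\ll_\alpha t^{-\alpha}$ uniformly in $n$, so $\int_M^\infty R_n t^{s-1}dt\ll M^{\sigma-\alpha}$. Rather than bounding $A_n$ through (C) alone (which only gives a bound growing with $n$), I would use that $A_n/t=\phi-g_n+R_n$ on $(M,\infty)$; squaring and integrating produces the self-referential estimate $A_n^2\ll \epsilon_\phi(M)+MD_n^2+M^{-2(\alpha-1)}$, where $\epsilon_\phi(M)=M\int_M^\infty\phi^2\to0$ by (T2). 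Since $\big|A_n\int_M^\infty t^{s-2}dt\big|\ll|A_n|M^{\sigma-1}$ and the tail $\delta_\phi(M)=\int_M^\infty\phi t^{s-1}dt\to0$ (the integral for $\widehat\phi(s)$ converging), inserting the bound on $A_n$ gives
\[ |\widehat\phi(s)|\ll D_nM^{\sigma-1/2}+\delta_\phi(M)+M^{\sigma-1}\sqrt{\epsilon_\phi(M)}+M^{\sigma-\alpha}. \]
Every term but the first vanishes as soon as $M\to\infty$ (recall $\sigma<1<\alpha$), while the first is killed by letting $M=M_n\to\infty$ slowly relative to $D_n$, for instance $M_n=D_n^{-1/(2\sigma-1)}$, which yields $D_nM_n^{\sigma-1/2}=D_n^{1/2}\to0$. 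This forces $\widehat\phi(s)=0$, contradicting (T1); hence $\zeta$ has no zero with $1/2<\sigma<1$ and RH holds.

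The main obstacle is precisely this tail analysis. Because $\sigma>1/2$, the $L^2$-control on $g_n$ says nothing about $\int_M^\infty g_n t^{s-1}dt$, so one must isolate the harmonic term $A_n/t$ and, crucially, control its coefficient \emph{through} $D_n$ via the self-referential bound rather than through~(C), whose direct use degrades with $n$. The final balancing succeeds only because the tail side imposes no rate on $M_n$ beyond $M_n\to\infty$, which leaves exactly enough freedom to also defeat the head term $D_nM_n^{\sigma-1/2}$ even though the hypothesis supplies no prescribed decay rate for $D_n$.
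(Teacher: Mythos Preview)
Your argument is correct and follows the same overall strategy as the paper: assume a zero $s$ with $1/2<\sigma<1$, use the Mellin identity to reduce to showing $\int_0^\infty g_n(t)t^{s-1}\,dt\to0$, split at a moving threshold $M_n$, bound the head by Cauchy--Schwarz, and on the tail isolate the harmonic piece and control its coefficient through a self-referential estimate tied back to $D_n$ and to (T2). The one genuine difference is the tail decomposition. The paper conditions on the events $\{Z_{k,n}\le M_n\}$ and $\{Z_{k,n}\ge M_n\}$, so that on the first event and for $t\ge M_n$ one has $\{Z_{k,n}/t\}=Z_{k,n}/t$ exactly; the price is that the truncated coefficient $\EE[\1_{Z_{k,n}\le M_n}Z_{k,n}]$ appears in place of $\EE Z_{k,n}$, and an additional remainder $\sum_k c_{k,n}\EE[\1_{Z_{k,n}\ge M_n}\{Z_{k,n}/t\}]$ must be carried through the self-referential step. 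You instead use the global identity $\{x\}=x-\lfloor x\rfloor$ and the uniform pointwise bound $\EE\lfloor Z_{k,n}/t\rfloor\le\EE(Z_{k,n}/t)^\alpha\ll_\alpha (kt)^{-\alpha}$, which dispatches the remainder $R_n$ in one stroke and leaves the pure constant $A_n=\sum_k c_{k,n}\EE Z_{k,n}$ for the self-referential bound. Your route is a little more economical; the paper's makes the role of the tail probabilities $\PP(Z_{k,n}\ge M_n)$ more visible. Either way, the crux is the same self-referential control of the harmonic coefficient and the free choice of $M_n\to\infty$ subject only to $D_nM_n^{\sigma-1/2}\to0$, and you handle both correctly.
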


\begin{proof} We first compute the following Mellin transform:
\bean 
\int_0^\infty \left(\phi(t)- \sum_{k=1}^n c_{k,n} \pE\left\{\frac{Z_{k,n}}{t}\right\}\right) t^{s-1} dt 
& = & \widehat{\phi}(s) +  \frac{\zeta(s)}{s} \sum_{k=1}^n c_{k,n} \pE Z_{k,n}^s.
\eean
Suppose for contradiction that $\zeta(s)=0$ for some fixed $s$ with $\frac{1}{2}<\sigma<1$. We thus have from (T1),
\bea \label{eq:MellingNB}
0<\left|\widehat{\phi}(s) \right|^2 = \left| \int_0^\infty \left(\phi(t)- \sum_{k=1}^n c_{k,n} \pE\left\{\frac{Z_{k,n}}{t}\right\}\right) t^{s-1} dt \right|^2 = |I_n|^2.
\eea 
 We will prove that the right-hand side of~\eqref{eq:MellingNB} goes to $0$ as $n \rightarrow \infty$, which contradicts $\widehat{\phi}(s)\neq 0$.
 
\medskip

We split $I_n$ and use the inequality
\bean
|I_n|^2 &\ll& |I_{1,n}|^2+|I_{2,n}|^2 \\&=& \left|\int_0^{M_n} \left(\phi(t)- \sum_{k=1}^n c_{k,n} \pE\left\{\frac{Z_{k,n}}{t}\right\}\right) t^{s-1} dt\right|^2+\left|\int_{M_n}^\infty \left(\phi(t)-\sum_{k=1}^n c_{k,n} \pE\left\{\frac{Z_{k,n}}{t}\right\}\right) t^{s-1} dt\right|^2,
\eean
where the moving threshold $M_n \ge 1$ is chosen so that
\begin{itemize}
    \item $M_n \xrightarrow[n \rightarrow \infty]{} \infty$,
    \item $M_n^{2 \sigma-1} D_n^2 \xrightarrow[n \rightarrow \infty]{} 0$ (this is possible since we assume $D_n\to 0$).
\end{itemize}
The first integral is bounded with the Cauchy-Schwarz inequality:
\bean
\left|I_{1,n} \right|^2
& \le &  D_n^2 \int_0^{M_n} t^{2\sg -2}dt =  D_{n}^2 \frac{M_n^{2\sg-1}}{2\sg-1}\xrightarrow[n \rightarrow \infty]{} 0.
\eean
In order to bound $|I_{2,n}|$, we write
\bean
I_{2,n} &=& \int_{M_n}^\infty \phi(t) t^{s-1} dt - \sum_{k=1}^n c_{k,n} \pE \int_{M_n}^\infty \left\{\frac{Z_{k,n}}{t}\right\} t^{s-1} dt.
\eean
As $\widehat{\phi}(s)$ is well-defined, we have $\int_{M_n}^\infty \phi(t) t^{s-1} dt \xrightarrow[n \rightarrow \infty]{} 0$. We split the other integrals:
\bean
\pE \int_{M_n}^\infty \left\{\frac{Z_{k,n}}{t}\right\} t^{s-1} dt & = &
\pE \int_{M_n}^\infty \1_{Z_{k,n}\le M_n}\left\{\frac{Z_{k,n}}{t}\right\} t^{s-1} dt + \pE \int_{M_n}^\infty \1_{Z_{k,n}\ge M_n}\left\{\frac{Z_{k,n}}{t}\right\} t^{s-1} dt \\
 & = & A_{k,n} + B_{k,n}.
\eean
(1) We first bound the term $\d \sum_{k=1}^n c_{k,n} B_{k,n}$, by splitting again each integral:
\bea
B_{k,n} & = & \pE \int_{M_n}^{Z_{k,n}} \1_{Z_{k,n} \ge M_n}\left\{\frac{Z_{k,n}}{t}\right\} t^{s-1} dt 
+ \pE \int_{Z_{k,n}}^\infty \1_{Z_{k,n} \ge M_n} \left\{\frac{Z_{k,n}}{t}\right\} t^{s-1} dt.
\eea
For the first integral, we use the bound $\left|\left\{\frac{Z_{k,n}}{t}\right\}\right| \le 1$. For the second integral, we notice that $\left\{\frac{Z_{k,n}}{t}\right\} = \frac{Z_{k,n}}{t}$ when $t \ge Z_{k,n}$. The triangle inequality then gives:
\bea
|B_{k,n}| & \le & \pE  \1_{Z_{k,n} \ge M_n} \int_{M_n}^{Z_{k,n}} t^{\sigma-1} dt + \pE  \1_{Z_{k,n} \ge M_n} Z_{k,n} \int_{Z_{k,n}}^\infty t^{\sigma-2} dt \\
 & \ll & \pE  \1_{Z_{k,n} \ge M_n} Z_{k,n}^{\sigma}.
 \eea
 Thus, by the Cauchy-Schwarz inequality,
 \bea \label{changegamma}
 |B_{k,n}| &\le & \sqrt{\PP(Z_{k,n} \ge M_n)}\ \sqrt{\pE Z_{k,n}^{2 \sigma}}.
\eea
Thus, by the triangle and Cauchy-Schwarz inequalities again,
\bean
\left|\sum_{k=1}^n c_{k,n}B_{k,n}\right|^2 & \le & 
\sum_{k=1}^n |c_{k,n}|^2 \PP(Z_{k,n} \ge M_n) \sum_{k=1}^n \pE Z_{k,n}^{2 \sigma}.
\eean
Let us notice that $\pE Z_{k,n}^{2 \sigma}=\|Z_{k,n}\|_{2\sg}^{2\sg}\le \|Z_{k,n}\|_{2}^{2\sg}$ since $2\sg\le 2$. Since $2\sg>1$, we can deduce from \eqref{moment} and \eqref{eq:cknBound} that 
\bean
\sum_{k=1}^n c_{k,n}B_{k,n} & \xrightarrow[n\to\infty]{} & 
0.
\eean
(2) It remains to bound the term $\d \sum_{k=1}^n c_{k,n} A_{k,n}$. We notice that, for $t\ge M_n$,
\bean \1_{Z_{k,n} \le M_n} \left\{\frac{Z_{k,n}}{t}\right\} = \1_{Z_{k,n} \le M_n} \frac{Z_{k,n}}{t},
\eean
so
\bea
\left|\sum_{k=1}^n c_{k,n}A_{k,n}\right|^2 &=& \left|\sum_{k=1}^n c_{k,n} \pE \1_{Z_{k,n} \le M_n}Z_{k,n} \int_{M_n}^\infty t^{s-2} dt \right|^2\\ & \ll & \left|\sum_{k=1}^n c_{k,n} \pE \1_{Z_{k,n} \le M_n} Z_{k,n}\right|^2 M_n^{2\sigma-2}.
\eea
But
\bean
M_n^{2\sigma-2} \left|\sum_{k=1}^n c_{k,n} \pE  \1_{Z_{k,n}\le M_n}Z_{k,n} \right|^2  & = & M_n^{2\sigma-1} \int_{M_n}^\infty \left(\sum_{k=1}^n c_{k,n} \pE \1_{Z_{k,n}\le M_n} \frac{Z_{k,n}}{t}\right)^2 dt \\
& = & M_n^{2 \sigma -1} \int_{M_n}^\infty \left(\sum_{k=1}^n c_{k,n} \pE \1_{Z_{k,n} \le M_n} \left\{\frac{Z_{k,n}}{t}\right\}\right)^2 dt,
\eean
and
\bean
\int_{M_n}^\infty \left(\sum_{k=1}^n c_{k,n} \pE \1_{Z_{k,n} \le M_n} \left\{\frac{Z_{k,n}}{t}\right\}\right)^2 dt   \ll D_n^2 + \int_{M_n}^\infty \phi(t)^2 dt  + \int_{M_n}^\infty \left(\sum_{k=1}^n c_{k,n} \pE \1_{Z_{k,n} \ge M_n} \left\{\frac{Z_{k,n}}{t}\right\}\right)^2 dt.
\eean
The sequence $(M_n)_{n \ge 1}$ has been chosen such that $M_n^{2 \sigma-1}D_n^2 \xrightarrow[n \rightarrow \infty]{} 0$. Due to assumption (T2) we have $\d M_n^{2\sigma-1} \int_{M_n}^\infty \phi(t)^2 dt \xrightarrow[n \rightarrow \infty]{} 0$ since $2\sg-1<1$. Let us bound the third term:
\bean
\pE  \1_{Z_{k,n} \ge M_n} \left\{\frac{Z_{k,n}}{t}\right\} & \le & \sqrt{\PP(Z_{k,n} \ge M_n)}\ \sqrt{\pE \left\{\frac{Z_{k,n}}{t}\right\}^2}.
\eean
Therefore
\bean
\left(\sum_{k=1}^n c_{k,n} \pE \1_{Z_{k,n}\ge M_n} \left\{\frac{Z_{k,n}}{t}\right\}\right)^2  dt 
& \le & \sum_{k=1}^n |c_{k,n}|^2\PP(Z_{k,n} \ge M_n)  \sum_{k=1}^n \pE\left\{\frac{Z_{k,n}}{t}\right\}^2dt,
\eean
and
\bean
\int_{M_n}^\infty \left(\sum_{k=1}^n c_{k,n} \pE \1_{Z_{k,n}\ge M_n} \left\{\frac{Z_{k,n}}{t}\right\}\right)^2  dt      & \le & \sum_{k=1}^n |c_{k,n}|^2\PP(Z_{k,n} \ge M_n) \sum_{k=1}^n \int_{M_n}^\infty\pE\left\{\frac{Z_{k,n}}{t}\right\}^2dt.
\eean
Due to Proposition~\ref{prop:Malpha} (take $\alpha=2\sigma -1$), we have
\bean
M_n^{2\sigma -1}\pE\int_{M_n}^\infty\left\{\frac{Z_{k,n}}{t}\right\}^2 dt & \ll_\sg & \|Z_{k,n}\|_2^{2\sg}.
\eean
Since $2\sg>1$ we can use \eqref{moment}, together  with condition (\ref{eq:cknBound}), to obtain
\bean 
M_n^{2\sigma -1} \int_{M_n}^\infty \left(\sum_{k=1}^n c_{k,n} \pE \1_{Z_{k,n}\ge M_n} \left\{\frac{Z_{k,n}}{t}\right\}\right)^2 dt \xrightarrow[n \rightarrow \infty]{} 0.
\eean 
(3) Hence $|I_{2,n}|\xrightarrow[n \rightarrow \infty]{} 0$, which concludes the proof.
\end{proof}

\subsection{RH implies gNB for dilated random variables}\label{gNBdilated}

The proof of the implication RH $\Longrightarrow$ gNB for dilated r.v. $Z_k=X/k$ is based on the Necessary part of B\'aez-Duarte's theorem \cite[Theorem 1.2]{BD05} regarding M\"untz transform $P$.
We first give a probabilistic interpretation of $P$.

\begin{lemma}\label{prop:EP}
Let $X\in L^1_+(\Omega)$ and set $f(x)=\PP(X\ge  x)$, $x\ge0$. Then $Pf$ is well defined and 
\begin{equation} \label{eq:EPf}
\EE[\{X/t\}]  =  -Pf(t), \quad t>0.
\end{equation}
\end{lemma}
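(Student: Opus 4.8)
The plan is to reduce the statement to two elementary facts: the layer-cake (tail) representation of expectations already recorded in the excerpt, and a single interchange of summation and expectation justified by nonnegativity. First I would note that $X\in L^2_+(\Omega)\subset L^1_+(\Omega)$, so $\EE X<\infty$, and invoke the identity stated just before Proposition~\ref{prop:rhoL2}, namely $\EE X=\int_0^\infty \PP(X\ge x)\,dx=\int_0^\infty f(x)\,dx$. This already shows $f\in L^1(0,\infty)$, so the integral term in $Pf(t)$ is finite and, after dividing by $t$, equals $\EE X/t$.

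Next I would treat the series $\sum_{k\ge1}f(tk)$. Writing $f(tk)=\PP(X\ge tk)=\EE\,\1_{\{X\ge tk\}}$ and applying Tonelli's theorem (all terms are nonnegative), I obtain
\[
\sum_{k\ge1}f(tk)=\EE\sum_{k\ge1}\1_{\{X/t\ge k\}}=\EE\,\lfloor X/t\rfloor,
\]
using that, for each fixed $\omega$, the number of integers $k\ge1$ with $k\le X/t$ is exactly $\lfloor X/t\rfloor$. Since $\EE\,\lfloor X/t\rfloor\le \EE X/t<\infty$, the series converges, so $Pf(t)$ is well defined for every $t>0$.

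Finally I would combine the two computations. Using the decomposition $\{X/t\}=X/t-\lfloor X/t\rfloor$ and linearity of expectation,
\[
\EE[\{X/t\}]=\frac{\EE X}{t}-\EE\,\lfloor X/t\rfloor=\frac{1}{t}\int_0^\infty f(x)\,dx-\sum_{k\ge1}f(tk)=-Pf(t),
\]
which is precisely the claimed identity. The only delicate point is the middle step, i.e. the interchange of sum and expectation together with the identification $\sum_{k\ge1}f(tk)=\EE\,\lfloor X/t\rfloor$; I do not expect a genuine obstacle here, since the nonnegativity of every summand makes Tonelli's theorem directly applicable, and the integrability of $f$ secured in the first step guarantees finiteness of all quantities involved.
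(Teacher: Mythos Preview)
Your proof is correct. The approach differs from the paper's in a small but genuine way. The paper applies the layer-cake formula directly to the bounded variable $\{X\}$, writing $\EE[\{X\}]=\int_0^1\PP(\{X\}\ge x)\,dx$ and then decomposing the event $\{\{X\}\ge x\}$ over the integer intervals $[k,k+1)$ to arrive at $\int_0^\infty f - \sum_{k\ge1}f(k)$; the general case follows by replacing $X$ with $X/t$. You instead use the algebraic identity $\{X/t\}=X/t-\lfloor X/t\rfloor$ together with the counting formula $\lfloor y\rfloor=\sum_{k\ge1}\1_{y\ge k}$, and handle the two pieces separately via the tail formula for $\EE X$ and Tonelli. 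Your route is slightly more streamlined, since the identification $\sum_{k\ge1}f(tk)=\EE\lfloor X/t\rfloor$ is immediate and avoids the intermediate splitting $\PP(\{X\}\ge x)=\sum_{k\ge0}(f(k+x)-f(k+1))$; the paper's computation, on the other hand, stays entirely in the language of the tail function $f$ and makes the separation of the convergent sums explicit before recombining them. Either way the justification is the same nonnegativity/Tonelli argument, and both arrive at the identical final identity.
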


\begin{proof}
First, notice that $0\le f(k+1)\le \int_0^1 f(k+x)dx$, $k\ge 0$, so that the following quantities are well defined: 
\bean
\sum_{k\ge 0}f(k+1) \le \int_0^\infty f(x)dx =\EE X <\infty.
\eean
Since $0\le  \{X\}\le  1$, we can write
\begin{eqnarray*}
\EE [\{X\}] & = & \int_0^1 \PP(\{X\}\ge  x)dx \\     & = & \int_0^1 \sum_{k\ge   0} \PP(k+x\le  X<k+1)dx  =  \int_0^1 \sum_{k\ge   0} (f(k+x)-f(k+1))dx \\
    & = & \sum_{k\ge   0} \int_0^1 f(k+x)dx - \sum_{k\ge   0}f(k+1) = \int_0^{+\infty} f(x)dx - \sum_{k\ge   1}f(k).
\end{eqnarray*}
Set $t>0$. Then $\PP(X/t\ge   x)=f(tx)$ and so 
\begin{equation}
\EE[\{X/t\}] = \int_0^{+\infty} f(tx)dx - \sum_{k\ge   1}f(tk) = \frac{1}{t} \int_0^{+\infty} f(x)dx - \sum_{k\ge   1}f(tk),
\end{equation}
as desired.
\end{proof}

B\'aez-Duarte introduced in \cite[Definition 1.1]{BD05} the definition of a good kernel $f$, i.e. $f$ is a continuously differentiable function on $(0,\infty)$ with $\int_0^\infty|f(t)|dt<\infty$ and  $\int_0^\infty t|f'(t)|dt<\infty$.
Let us notice that if $X$ is a positive integrable r.v. with a continuous density $\phi$ then $f(t)=\bP(X\ge t)$ is a good kernel, and $f'=-\phi$.
We also obtain the probabilistic interpretation of the formula (2.4) in \cite{BD05}:
\begin{equation*}
Pf(t)=\sum_{k\ge 1}f(kt)-\int_0^\infty f(ut)du = t\int_0^\infty f'(ut)\{u\}du=\int_0^\infty \{x/t\} f'(x)dx = -\EE[\{X/t\}].
\end{equation*}

\begin{theorem}\label{Dn}
Let $X\in L^{q}_+(\Omega)$, $q>1$, be a r.v. with a continuous density.\\ 
If RH holds,
then there exist coefficients $c_{k,n}$ such that
\begin{enumerate}
    \item[(gNB)]
$\d D_n^2 = \int_0^\infty \left|\bP(X\ge t)-\sum_{k=1}^n c_{k,n}\pE\left\{\frac{X}{kt}\right\} \right|^2 dt \xrightarrow[n \rightarrow \infty]{} 0$ ;
\item[(C)] For any $M_n\to\infty$, 
\bean 
\sum_{k=1}^n |c_{k,n}|^2\PP(X/k \ge M_n) \xrightarrow[n\to\infty]{} 0.
\eean
\end{enumerate}
\end{theorem}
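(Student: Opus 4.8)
The plan is to observe that, for dilated variables $Z_k = X/k$, the criterion (gNB) is exactly the $L^2$-approximation of the good kernel $f(t) = \PP(X \ge t)$ by dilates of its M\"untz transform, so that the \emph{necessary} part of Theorem~\ref{gBD} applies verbatim. First I would record that $f$ is a good kernel, as already noted before the statement: since $X$ has continuous density $\phi$, the function $f$ is $C^1$ with $f' = -\phi$, and $\int_0^\infty |f| = \EE X < \infty$ together with $\int_0^\infty t\,|f'(t)|\,dt = \EE X < \infty$. Moreover $f\le 1$ gives $f^2\le f$, so $f\in H$. Next, Lemma~\ref{prop:EP}, applied after the substitution $t \mapsto kt$, yields $\EE\{X/(kt)\} = -Pf(kt)$, whence
\[
D_n^2 = \int_0^\infty \Big| f(t) + \sum_{k=1}^n c_{k,n}\, Pf(kt)\Big|^2\,dt = \Big\| f + \sum_{k=1}^n c_{k,n}\, Pf(k\cdot)\Big\|_H^2 .
\]

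Under RH, the necessary part of Theorem~\ref{gBD} gives $f \in \span_H\{t \mapsto Pf(nt),\ n \ge 1\}$, which provides approximating coefficients; taking $c_{k,n}$ to be their negatives forces $D_n \to 0$, and this settles (gNB). The delicate point is condition (C): the abstract span membership carries \emph{no} a priori control on the size of the coefficients. To reach (C) I would therefore exhibit an explicit admissible choice rather than invoke the span statement as a black box.

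The natural candidate comes from the Mellin side. Using the M\"untz formula $\widehat{Pf}(s) = \zeta(s)\widehat{f}(s)$ in the strip $0<\sg<1$, one has $\widehat{Pf(k\cdot)}(s) = k^{-s}\zeta(s)\widehat{f}(s)$, so that
\[
\widehat{\,f + \textstyle\sum_{k\le n} c_{k,n}\, Pf(k\cdot)\,}(s) = \widehat{f}(s)\Big(1 + \zeta(s)\sum_{k\le n} c_{k,n}\, k^{-s}\Big).
\]
Since $\sum_{k\ge1}\mu(k)k^{-s} = 1/\zeta(s)$, this singles out the regularized M\"obius coefficients $c_{k,n} = -\mu(k)\,k^{-\ve_n}$ with $\ve_n \downarrow 0$, exactly as in Theorem~\ref{th:quantitativeBD}. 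Because $|\mu(k)|\le 1$ gives $|c_{k,n}|\le 1$, condition (C) is then immediate: for any $\beta>1$,
\[
\sum_{k=1}^n \frac{c_{k,n}^2}{k^\beta} \le \sum_{k=1}^n \frac{1}{k^\beta} \le \zeta(\beta) \ll 1 .
\]

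The main obstacle is thus to verify that this \emph{bounded} choice still realizes (gNB). By Plancherel (as in \cite[Prop.~1]{BdR10}) the task reduces to
\[
D_n^2 \ll \int_{\RR} \big|\widehat{f}(s)\big|^2\, \Big| 1 - \zeta(s)\sum_{k\le n}\mu(k)\,k^{-s-\ve_n}\Big|^2\,dt \xrightarrow[n\to\infty]{} 0, \qquad s = \tfrac12 + it,
\]
which I would handle along the lines of the proof of Theorem~\ref{th:RHpNB}: split the integral at a moving threshold $T_n$, bound the central part by combining the Lindel\"of estimate of Theorem~\ref{th:Lindelof} with the decay of $\widehat{f}$ on the critical line afforded by $f$ being a good kernel, bound the tail via the $|s|^{-1}$-type decay, and finally tune $T_n$ together with a diagonal choice $\ve_n\to0$ as in Theorem~\ref{th:quantitativeBD}. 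In short, the gNB conclusion is a direct translation of the necessary part of Theorem~\ref{gBD}, while the real work is confirming that explicitly M\"obius-type coefficients simultaneously drive $D_n\to0$ and satisfy the growth bound (C).
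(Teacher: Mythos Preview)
Your proposal is correct in substance and shares the same skeleton as the paper's proof: identify $f(t)=\PP(X\ge t)$ as a good kernel, use Lemma~\ref{prop:EP} to rewrite $D_n^2$ as $\|f+\sum_k c_{k,n}Pf(k\cdot)\|_H^2$, take regularized M\"obius coefficients $c_{k,n}=-\mu(k)k^{-\ve}$ so that $|c_{k,n}|\le 1$ settles (C), and then check that these coefficients also yield $D_n\to 0$.

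Where you diverge is in the last step, and you make it harder than necessary. The paper does not redo a Plancherel/Lindel\"of/threshold argument; it simply cites \cite[Section~3.1]{BD05}, whose point is precisely that the passage from the classical B\'aez--Duarte approximation to the good-kernel version \emph{preserves the coefficients}: if $\|\chi-\sum_k c_{k,n}\rho_{1/k}\|_H\to 0$ then $\|f-\sum_k c_{k,n}Pf(k\cdot)\|_H\to 0$ with the same $(c_{k,n})$. Since \cite{BdR10} furnishes bounded $(c_{k,n})$ under RH, both (gNB) and (C) follow at once. Your worry that ``the abstract span membership carries no a priori control on the size of the coefficients'' is thus resolved not by a fresh estimate but by tracking coefficients through the existing transfer argument. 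Incidentally, even along your direct route no Lindel\"of or moving threshold is needed: from $\widehat f(s)=s^{-1}\EE[X^s]$ and $|\EE[X^{1/2+it}]|\le \EE[X^{1/2}]<\infty$ one gets $|\widehat f(s)|\ll |s|^{-1}$ on the critical line, whence $D_n^2\ll \nu_{n,\ve}$ pointwise under the Plancherel integral, and Theorem~\ref{th:quantitativeBD} finishes.
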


\begin{proof}
Since RH holds, there exists coefficients $c_{k,n}$ bounded in $k$ and $n$ (see \cite{BdR10}), such that 
 \bean
 \int_0^\infty\left(\chi(t)-\sum_{k=1}^n c_{k,n}\{1/kt\}\right)^2dt \xrightarrow[n\to\infty]{} 0.
 \eean
 Then, B\'aez-Duarte deduces in Section 3.1 in \cite{BD05} that, for a good kernel $f$,
 \bean
 \int_0^\infty\left(f(t)-\sum_{k=1}^n c_{k,n}Pf(kt)\right)^2dt \xrightarrow[n\to\infty]{} 0.
 \eean
 Hence using our Lemma \ref{prop:EP}, we deduce that (gNB) holds for the r.v. $X/k$ and the target function $f:t\mapsto \bP(X\ge t)$, which is a good kernel (see above). 
 
 Condition (C) then follows from the boundedness of the coefficients $c_{k,n}$ and the inequality 
 \bean
 \PP(X/k \ge M_n) & \le & \frac{\pE X^{q}}{k^{q}M_n^{q}},
 \eean
 since $q>1$.
\end{proof}

\section{Examples} \label{sec:conclusion}

To illustrate our theorems, we give two typical examples:
\begin{enumerate}
    \item[(1)]\emph{Dilation}: Let $X_k=X/k$ where $X\sim\cal E(1)$. \\
    We have $\pE X^{q}<\infty$, $q>1$, and $\d \|X_{k}\|_2=\frac{\sqrt{2}}{k}$, so we can apply Theorem \ref{Dn} and Theorem \ref{th:gNBtoRH}.
    \item[(2)]\emph{Concentration}: Let $Z_{k,n}=Y_{k,n}^2$ where $Y_{k,n}\sim \Gamma\left(\frac{n^4}{k},\frac{n^4}{\sqrt{k}}\right)$, $1\le k\le n$. \\
    We have $\pE Y_{k,n} =1/\sqrt{k}$ and $\vari( Y_{k,n})=n^{-4}$. Since $Y_{1,n}$ is distributed as $\frac{E_1+\cdots+E_{n^4}}{n^4}$ where the $E_k$'s are i.i.d. $\mathcal{E}(1)$, the Central Limit Theorem gives $\bP(Y_{1,n}\ge 1) \rightarrow 1/2 < 1$. So we can apply Theorem \ref{th:RHpNB}. Moreover $\|Z_{k,n}\|_2=\|Y_{k,n}\|_4^2 = \pE[Y_{k,n}^4]^{1/2} \ll \frac{1}{k}+\frac{1}{n^2}$ ($Y_{k,n}$ is concentrated around $1/\sqrt{k}$ as $n$ growing), so Assumption \eqref{moment} in Theorem \ref{th:gNBtoRH} is verified. \\
\end{enumerate}

We summarize below the relationships between the various criteria :

\begin{center}\label{implication}
$\begin{array}{ccccc}
  &  &  &  & {\rm gNB}\left(\Gamma\left(\frac{n^4}{k},\frac{n^4}{\sqrt{k}}\right)^2_{1\le k\le n}\right) + (C) \\
  & & & \rotatebox[origin=c]{225}{\(\Longrightarrow\)}  & \Uparrow\\
{\rm gNB}\left(\cal E(k)_{k\ge 1}\right) +(C) & \Longleftrightarrow  &  {\rm RH} & \Longrightarrow & {\rm pNB}\left(\Gamma\left(\frac{n^4}{k},\frac{n^4}{\sqrt{k}}\right)^2_{1\le k\le n}\right) +(C) \\
  &   &  \Updownarrow & &  \\
  &  &  {\rm NB} &  & 
\end{array}$
\captionof{figure}{Links between the various criteria.}
\end{center}

Notice that condition (C) is not necessary for the implication $${\rm pNB}\left(\Gamma\left(\frac{n^4}{k},\frac{n^4}{\sqrt{k}}\right)^2_{1\le k\le n}\right)\Longrightarrow RH,$$ which is one of the interests of pNB. The correlation structure of these r.v., which is not explored here, might also be of some importance in pNB.

The computation of the scalar products in Example (1) is studied in \cite{DH20}: the main formula shows a striking simplification compared to Vasyunin's formula \cite{Vas95}.


\section*{Acknowledgement} The authors are very grateful to the anonymous referee for his careful reading, understanding and valuable remarks that improved this paper. The first author
warmly thanks Michel Balazard and \'Eric Saias for numerous engaging conversations over many years, especially preceding the first version of the paper.


\begin{thebibliography}{5}

\bibitem[AS08]{AS08} N. Alon, J.H. Spencer. The probabilistic method.
Third edition.
With an appendix on the life and work of Paul Erd\"os.
Wiley-Interscience Series in Discrete Mathematics and Optimization. John Wiley \& Sons, Inc., Hoboken, NJ, 2008.



\bibitem[BD99]{BD99}L. B\'aez-Duarte. A class of invariant unitary operators. {\em Adv. Math.}, 144 (1999), no. 1, 1--12.

\bibitem[BD03]{BD03} L. B{\'a}ez-Duarte. A strengthening of the Nyman-Beurling criterion for the Riemann hypothesis, {\em Rend. Mat. Ac. Lincei}, S. 9, 14 (2003) 1, 5-11.

\bibitem[BD05]{BD05}L. B{\'a}ez-Duarte. A general strong Nyman-Beurling criterion for the Riemann hypothesis. {\em Publications de l'Institut Math\'ematique}, Nouvelle S\'erie, 78 (2005), pp. 117--125.

\bibitem[BDBLS00]{BDBLS00} L. B{\'a}ez-Duarte, M. Balazard, B. Landreau and \'E. Saias. Notes sur la fonction $\zeta$ de Riemann, 3.
(French) [Notes on the Riemann $\zeta$-function, 3] {\em Adv. Math.}, 149 (2000), no. 1, 130--144.

\bibitem[BDBLS05]{BDBLS05} L. B{\'a}ez-Duarte, M. Balazard, B. Landreau and \'E. Saias. \'{E}tude de l'autocorr{\'e}lation multiplicative de la fonction "partie fractionnaire". (French) {\em The Ramanujan Journal}, 9(1) (2005), pp. 215--240.

\bibitem[BDBLS03]{BDBLS03} L. B{\'a}ez-Duarte, M. Balazard, B. Landreau and \'E. Saias. Document de travail -- \'{E}tude de l'autocorr{\'e}lation multiplicative de la fonction "partie fractionnaire". (French)

\bibitem[Bal10]{Bal10} M. Balazard. Un si\`ecle et demi de recherches sur l'hypoth\`ese de Riemann. {\em La Gazette des math\'ematiques}, 126 (2010), pp.7--24.


\bibitem[BdR10]{BdR10} M. Balazard and A. de Roton. Sur un crit\`ere de B{\'a}ez-Duarte pour l'hypoth\`ese de Riemann. {\em International Journal of Number Theory}, 6(04) (2010), pp. 883--903.

\bibitem[BS04]{BS04} M. Balazard, and \'E. Saias. Notes sur la fonction $\zeta$ de Riemann, 4. {\em Advances in Mathematics}, 188(1) (2004), pp. 69--86.

		

\bibitem[Beu55]{Beu55} A. Beurling. A closure problem related to the Riemann Zeta-function. {\em Proceedings of the National Academy of Sciences}, 41(5) (1955), pp. 312--314.

\bibitem[Bur02]{Bur02} J.F. Burnol. A lower bound in an approximation problem involving the zeros of the Riemann zeta function. {\em Advances in Mathematics}, 170(1) (2002), pp.56--70.

\bibitem[Bur07]{Bur07} J.F. Burnol. Entrelacement de co-Poisson.
(French)  [Co-Poisson links] {\em Ann. Inst. Fourier (Grenoble)} 57 (2007), no. 2, 525--602.

\bibitem[Con03]{Con03} J.B. Conrey. The Riemann hypothesis.
{\em Notices Amer. Math. Soc.}, 50 (2003), no. 3, 341--353.

\bibitem[DH20]{DH20} S. Darses and E. Hillion. An exponentially-averaged Vasyunin formula. {\em Proc. of the American Math. Soc.} To appear https://doi.org/10.1090/proc/15422.

\bibitem[DFMR13]{DFMR13} C. Delaunay, E. Fricain, E. Mosaki, O. Robert.  Zero-free regions for Dirichlet series. {\em Trans. Amer. Math. Soc.}, 365 (2013), no. 6, 3227--3253.

\bibitem[Nik95]{Nik95} N.Nikolski. Distance formulae and invariant subspaces, with an application to localization of zeros of the Riemann $\zeta $-function. {\em Annales de l'institut Fourier} Vol. 45, No. 1 (1995), pp. 143-159.

\bibitem[Nym50]{Nym50} B. Nyman. On the one-dimensional translation group and semi-group in certain function spaces. {\em Thesis, University of Uppsala}, 1950.

\bibitem[Ten95]{Ten95} G. Tenenbaum. Introduction \`a la th\'eorie analytique et probabiliste des nombres, Soci\'et\'e math\'ematique de France, 1995.

\bibitem[Tit86]{Tit86} E. C. Titchmarsh, The theory of the Riemann zeta-function, second ed., The Clarendon Press Oxford University Press, New York, 1986.

\bibitem[Vas95]{Vas95} V.I. Vasyunin. On a biorthogonal system associated with the Riemann hypothesis. (Russian) {\em Algebra i Analiz} 7, no. 3 (1995): 118-35; translation in {\em St. Petersburg Mathematical Journal} 7, no. 3 (1996): 405-19.

\bibitem[W07]{W07} A. Weingartner. On a question of Balazard and Saias related to the Riemann hypothesis. {\em Adv. Math.} 208 (2007), no. 2, 905--908.



\end{thebibliography}
\end{document}